\tikzstyle{none}=[]
\tikzstyle{new style 0}=[draw,circle,fill=white]
\tikzstyle{new edge style 1}=[draw,dashed]
\tikzstyle{new edge style 1}=[draw,dashed]
\pgfplotsset{compat=1.14}
\numberwithin{equation}{section}%If in an article you would like equations numbered by section
\newtheorem{thm}{Theorem}[section]
\newtheorem{prop}[thm]{Proposition}
\newtheorem{cor}[thm]{Corollary}
\newtheorem{exm}[thm]{Example}
\newtheorem{df}[thm]{Definition}
\newtheorem{fac}[thm]{Fact}
\begin{document}
	\title{On the Incidence matrices of hypergraphs}
	\author[ Parui]{Samiron Parui} 
	\email[ Parui ]{\textit {{\scriptsize  samironparui@gmail.com}}}
	
	\address{School of Mathematical Sciences,  National Institute of Science Education and Research Bhubaneswar,  Bhubaneswar, Padanpur, Odisha 752050, India}

	%\address[ Parui]{Department of Mathematics and Statistics, Indian Institute of Science Education and Research Kolkata, Mohanpur-741246, India}
	
	\date{\today}
	\keywords{ Hypergraphs; Incidence matrix; Graphs and linear algebra; Units in hypergraphs; %twin units, 
 Adjacency Matrix %Laplacian Matrix, eigenvalues, vertex degrees, Enumeration in hypergraph theory, hypergraph-automorphism, Random walk on hypergraphs,Unit-distance,Equitable partition
 }
	
	\subjclass[2020]{Primary 05C65; % Hypergraphs 
 15A03% Vector spaces, linear dependence, rank, lineability
		; 05C50; %Graphs and linear algebra (matrices, eigenvalues,etc.), 
		 Secondary %37C99, %Smooth dynamical systems 
		%39A12, %Discrete version of topics in analysis
		 	05C07;%Vertex degrees
	%	34D06, %Synchronization of solutions to ordinary differential equations
		%92B25 %Biological rhythms and synchronization
		 	05C30% 	Enumeration in graph theory
 %   05C82;  %Small world graphs, complex networks (graph-theoretic aspects)
	}
	
	\maketitle
	%\tableofcontents
	\begin{abstract}
	    This study delves into the incidence matrices of hypergraphs, with a focus on two types: the edge-vertex incidence matrix and the vertex-edge incidence matrix. The edge-vertex incidence matrix is a matrix in which the rows represent hyperedges and the columns represent vertices. For a given hyperedge $e$ and vertex $u$, the $(e,u)$-th entry of the matrix is $1$ if $u$ is incident to $e$; otherwise, this entry is $0$. The vertex-edge incidence matrix is simply the transpose of the edge-vertex incidence matrix. This study examines the ranks and null spaces of these incidence matrices. It is shown that certain hypergraph structures, such as $k$-uniform cycles, units, and equal partitions of hyperedges and vertices, can influence specific vectors in the null space. In a hypergraph, a unit is a maximal collection of vertices that are incident with the same set of hyperedges. Identification of vertices within the same unit leads to a smaller hypergraph, known as unit contraction. The rank of the edge-vertex incidence matrix remains the same for both the original hypergraph and its unit contraction. Additionally, this study establishes connections between the edge-vertex incidence matrix and certain eigenvalues of the adjacency matrix of the hypergraph.

	\end{abstract}
\section{Introduction}
The interrelation between the graph structure and the incidence matrix of the graph is well-studied in the literature \cite{nufflen-incidence-1,Brualdi-comb-mat,akbari-incidence,haemer-singular-incidence}. The rank of the incidence matrix is affected by the structure of the graph. For instance, a connected graph on $n$ vertices with $n$ edges contains a unique cycle $C$. The rank of its edge-vertex incidence matrix is $n$ or $n-1$, respectively, if the length of $C$ is odd or even (see \cite[P.37, Exercise-7]{Brualdi-comb-mat}). In a bipartite graph $G$ with the bipartition of the vertex set $V(G)=V_1\cup V_2$, the edge-vertex incidence matrix can never be of full rank. The vector $x:V(G)\to\{1,-1\}$ with $x(v)=1$ if $v\in V_1$ and $x(v)=-1$ for all $v\in V_2$ always belongs to the null space of the edge-vertex incidence matrix. In a graph on $n$ vertices and $m$ edges with $p$ number of bipartite components and $q$ isolated vertices, the rank of the vertex-edge incidence matrix associated with the graph is $n-p-q$ \cite{nufflen-incidence-1}. Here, we explore similar properties in hypergraphs. We consider a hypergraph, named $k$-uniform cycle of length $n$, such that it coincides with the cycle graph $C_n$ for the $k=2$ case. For the graph case (that is, the $k=2$ case), if the length $n$ is even, then using $-1$ and $1$ alternatively, we can have a vector $x$ in the null space of the edge-vertex incidence matrix of $C_n$. Here $1$ and $-1$ are the $2$-nd roots of the unity. Similarly, for hypergraphs, we show here that the $k$-th roots of unity can be used to describe the vectors in the null spaces of the edge-vertex incidence matrix of $k$ uniform cycles (see \Cref{x-w-prop}, \Cref{prop-cycle}, \Cref{gcd}).

In the context of a graph, the ranks of the incidence matrices of the graph are affected by the existence of specific substructures like even cycles, bipartite components, etc. (\cite{Brualdi-comb-mat,nufflen-incidence-1}). Similarly, we observe that the existence of specific substructures in a hypergraph is reflected in the null space of its incidence matrix (see the \Cref{thm-induced} and the \Cref{cycle-induced}). In any bipartite graph $G$ with the bipartition of the vertex set $V(G)=V_1\cup V_2$, for any edge $e$ in $G$, we have $|e\cap V_1|=|e\cap V_2|=1$. This property leads us to the vector $x:V(G)\to\{1,-1\}$ in the null space of the edge-vertex incidence matrix of the bipartite graph $G$ such that $x(v)=1$ if $v\in V_1$ and $x(v)=-1$ for all $v\in V_2$. We extend this property for hypergraphs and named it \emph{equal partition of hyperedges} (see \Cref{equal partition defn}). Given a hypergraph $H$ with the vertex set $V(H)$, a pair of disjoint subsets $V_1,V_2\subset V(H)$ is called an equal partition of hyperedges if $|e\cap V_1|=|e\cap V_2|$ for all hyperedges $e$ in $H$. The interrelation of this structure with the rank and null space of the edge-vertex incidence matrix of hypergraphs is described in the \Cref{thm-equal-partition}. In $\mathbb{R}^2$, the point $(0,0)$ is the midpoint of the line segment connecting $(-1,-1)$ and $(1,1)$. An equal partition of hyperedges and the vector in the null space of the edge-vertex incidence matrix due to the equal partition is similar to this fact from coordinate geometry. This fact has a generalization. If $(-a_1,-a_2)$ and $(b_1,b_2)$ are two endpoints of a line segment such that the origin $(0,0)$ divides the line segment in a ratio $p:q$, then $a_i:b_i=p:q$ for all $i=1,2$. This fact motivates the question of the existence of a substructure in a hypergraph that is a generalization of the equal partition of hyperedges and corresponds to vectors in the null space of the incidence matrix of the hypergraph. Positive answers to this question are presented as the \cref{ratio-r}, \Cref{uvw}, and \Cref{gen-ker}.  If $V_1$, and $V_2$ are two disjoint subsets of the vertex set with $|V_1\cap e|:|V_2\cap e|=r$ for all hyperedge $e$ in the hypergraph $H$, then the pair of sets $V_1$, and $V_2$ corresponds to a vector in the null space of the edge-vertex incidence matrix of the hypergraph.

An Equitable partition leads to a quotient matrix of matrices associated with graphs and hypergraphs \cite{Cvetkovic-1980,atik-equitable,agt-godsil-royle,unit,Benjamin-webb-gen-equitable-2019,Sarkar-banerjee-2020}. Each eigenvalue of the quotient matrix is also an eigenvalue of the original matrix \cite{Cvetkovic-1980}. The identification of all the vertices within the same unit results in the unit-contraction of the hypergraph. Specific matrices associated with hypergraphs often have a quotient matrix due to an equitable partition associated with the unit-contraction of the hypergraph. Thus, the eigenvalues of the specific matrices related to the unit-contraction of a hypergraph are also eigenvalues of matrices associated with the original hypergraph (see \cite{unit}). Here we observe a similar fact for the rank of the edge-vertex incidence matrix of a hypergraph. We show that the incidence matrices of the unit-contraction of a hypergraph, and the original hypergraph have the same rank (see \Cref{quotient-rank}).

The dual of a hypergraph $H$ is a hypergraph $H^*$ such that the vertex set of $H^*$ is the set of hyperedges in $H$. The edge set of $H^*$ has a bijection with the vertex set of $H$. The edge-vertex incidence matrix of $H$ is the vertex-edge incidence matrix of $H^*$. Thus, the results we have concluded for the edge-vertex incidence matrix have their analogous version for the vertex-edge incidence matrix (see \Cref{thm-equal partition of vertices} and \Cref{ratio-r-edge}). In a nutshell, we study how the rank and the null space of the incidence matrices of hypergraphs are related to the structures of the hypergraph.

Like graphs, various matrices associated with hypergraphs and their spectra are used to study hypergraphs \cite{Bretto-hypergraph,Sarkar-banerjee-2020,hg-mat,Swarup-panda-2022-hypergraph,equitable-me-2024,Trevisan_signless,invariantsubspaces-me}.
In \cite{unit}, it has been established that certain symmetric sub-structures of hypergraph are manifested in the eigenvalues of some matrices associated with hypergraph. Here, we show that some of these eigenvalues of some variations of the adjacency matrix associated with the hypergraph can be represented in terms of the columns of the incidence matrix of the hypergraph. There are multiple variations of adjacency matrices associated with hypergraphs in literature \cite{hg-mat,Bretto-hypergraph,my1st}. Unlike a graph, a hypergraph cannot be reconstructed from its adjacency matrices. That is, no variation of the adjacency matrix can encode the complete information of a hypergraph. Usually, each variation of adjacency can be associated with an edge-weight. For instance, for the adjacency described in \cite{Bretto-hypergraph}, the edge weight $w(e)=1$ for all the hyperedges $e$ in the hypergraph. For two distinct vertices $u$ and $v$, the $(u,v)$-th entry of the adjacency is the sum of the hyperedge weight over the collection of all the hyperedges that contain both $u$ and $v$. Similarly, for the adjacency described in \cite{hg-mat}, the weight $w(e)=\frac{1}{|e|-1}$ for all hyperedges $e$ in the hypergraph. In this work, we show that specific eigenvalues of these variations of adjacency can be expressed in terms of the edge-vertex incidence matrix and the hyperedge weight $w$ associated with the adjacency matrix.

\section{The null spaces of Incidence matrices of a hypergraph}
 A hypergraph $H$ is an ordered pair of sets $(V(H),E(H))$. Here $V(H)$ is a non-empty set, called the vertex set of the hypergraph $H$, and each element $v(\in V(H))$ is called a vertex in $H$. The set $E(H)$, called the hyperedge set in $H$, is such that each element $e(\in E(H))$ is a non-empty subset of the vertex set $V(H)$. Each element of $E(H)$ is called a hyperedge in $H$. The \emph{edge-vertex incidence matrix} $B_H=[b_{ev}]_{e\in E(H),v\in V(H)}$ of a hypergraph $H$ is defined by $b_{ev}=1$ if $v\in e$, and otherwise $b_{ev}=0$. The \emph{vertex-edge incidence matrix} $I_H$ of $H$ is the transpose of the edge-vertex incidence matrix. That is, $I_H=B_H^T$.

A hypergraph $H$ is called $k$-uniform if the cardinality of $e$ is $|e|=k$ for all $e\in E(H)$. A $2$-uniform hypergraph is called a \emph{graph}.
A \emph{cycle} of length $n$ is a graph $C_n$ with the vertex set $V(C_n)=[n]=\{i\in\mathbb{N}:i\le n\}$ and the hyperedge set $E(C_n)=\{e_i=\{i,i+1\}:i\in[n-1]\}\cup\{e_n=\{n,1\}\}$. Now, we recall an interesting fact about the rank of the edge-vertex incidence matrix from \cite[P.37, Exercise-7]{Brualdi-comb-mat}.
\begin{fac}[\cite{Brualdi-comb-mat}]\label{fact-cycle}
    Let $G$ be a connected graph with $n$ vertices and $n$ edges. The graph $G$ contains a unique cycle $C$. The edge-vertex incidence matrix $B_G$ of $G$ has rank $n$ if the length of $C$ is odd. If the length of $C$ is even, then $B_C$ has rank $n-1$. 
\end{fac}
Now, we explore if a similar fact is true for hypergraphs.
In the following definition, we describe a $k$-uniform hypergraph that coincides with the cycle in the $k=2$ case.
\begin{df}[$k$-uniform cycle]\label{$k$-uniform cycle} For some natural number $k(\ge 2)$, the \emph{$k$-uniform cycle} $C_n^k$ is a $k$-uniform hypergraph with the vertex set $V(C_n^k)=\mathbb{Z}_n$ with $n\ge k$, and the hyperedge set $E(C_n^k)=\{e_i:i\in \mathbb{Z}_n\}$, where $e_i=\{i,i+1,i+2,\ldots,i+(k-1)\}$.
\end{df}
In this definition, we use $\mathbb{Z}_n$ instead of $[n]$ to take advantage of its cyclic group structure. This structure allows us to use the relation $n+1=1$ within $\mathbb{Z}_n$. As a result, any hyperedge in $C_n^k$ has the form $\{l, l+1, l+2, \ldots, l+k-1\}$ for some $l \in \mathbb{Z}_n$, and we denote it by $e_l$. It is also worth noting that $C_n^2 = C_n$.
\begin{exm}[$4$-uniform cycle of length $8$, $C_8^4$]\label{c-4-8}\rm
The vertex set of $C_8^4$ is $V(C_8^4)=\mathbb{Z}_8$, and the hyperedge set is $E(C_8^4)=\{e_1=\{1,2,3,4\},e_2=\{2,3,4,5\},e_3=\{3,4,5,6\},e_4=\{4,5,6,7\},e_5=\{5,6,7,8\},e_6=\{6,7,8,1\},e_7=\{7,8,1,2\},e_8=\{8,1,2,3\}\} $.
\end{exm}
For the cycle graph $C_{2n}$, the vector $x:[2n]\to\{-1,1\}$, defined by $x(i)={(-1)}^{i}$, for all $i\in [2n]$, belongs to the null space of the edge-vertex incidence matrix $B_{C_{2n}}$. Since each edge of $C_{2n}$ consists of an odd and an even, therefore, for all $e\in E(C_{2n})$, the $e$-th entry of $B_{C_{2n}}x$ is $(B_{C_{2n}}x)(e)=1-1=0$. A similar result holds for the $k$-uniform cycle. For any natural number $k>1$, for a $k$-th root of unity $\omega$, we define $x_{\omega}:\mathbb{Z}_n\to\mathbb{C}$ as $x_{\omega}(i) =\omega^i$ for all $i\in\mathbb{Z}_n$.
\begin{prop}\label{x-w-prop}
    For some natural number $k>1$, if $n$ is a multiple of $k$, then $B_{C_n^k}x_{\omega}=0$, where $\omega(\ne 1)$ is a $k$-th root of unity.
\end{prop}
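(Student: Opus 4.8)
The plan is to compute the entries of the vector $B_{C_n^k} x_{\omega}$ directly and to show that each one vanishes. First I would dispose of a well-definedness point: since the vertex set is $\mathbb{Z}_n$, the assignment $x_{\omega}(i)=\omega^i$ only defines a genuine function on $\mathbb{Z}_n$ if it respects the relation $i+n=i$, i.e.\ if $\omega^{i+n}=\omega^i$, which amounts to $\omega^n=1$. Because $\omega$ is a $k$-th root of unity and $n$ is a multiple of $k$, writing $n=mk$ gives $\omega^n=(\omega^k)^m=1$, so $x_{\omega}$ is indeed well-defined. This is precisely the step at which the hypothesis $k\mid n$ is used.

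Next, I would fix an arbitrary hyperedge $e_l=\{l,l+1,\ldots,l+k-1\}$ and read off the corresponding entry of $B_{C_n^k}x_{\omega}$ from the definition of the edge-vertex incidence matrix, namely as the sum of $x_{\omega}$ over the vertices incident to $e_l$:
\[
(B_{C_n^k}x_{\omega})(e_l)=\sum_{v\in e_l}x_{\omega}(v)=\sum_{j=0}^{k-1}\omega^{l+j}=\omega^l\sum_{j=0}^{k-1}\omega^j.
\]
The last equality just factors out the common power $\omega^l$, leaving a length-$k$ geometric sum that no longer depends on $l$.

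Finally I would evaluate the inner sum. Since $\omega\neq 1$, the finite geometric series yields $\sum_{j=0}^{k-1}\omega^j=\frac{\omega^k-1}{\omega-1}$, and the relation $\omega^k=1$ makes the numerator vanish, so $\sum_{j=0}^{k-1}\omega^j=0$. Consequently $(B_{C_n^k}x_{\omega})(e_l)=\omega^l\cdot 0=0$ for every $l\in\mathbb{Z}_n$, and since $l$ was arbitrary this shows $B_{C_n^k}x_{\omega}=0$.

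I do not expect a genuine obstacle in this argument; the computation reduces to the classical identity that a length-$k$ geometric sum with ratio a $k$-th root of unity $\omega\neq 1$ is zero. The only subtlety worth flagging explicitly is the well-definedness of $x_{\omega}$ as a function on $\mathbb{Z}_n$, which is exactly what the assumption $k\mid n$ guarantees and which would fail in general without it.
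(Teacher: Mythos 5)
Your proof is correct and follows essentially the same route as the paper: both evaluate the entry of $B_{C_n^k}x_{\omega}$ at an arbitrary hyperedge $e_l$, factor out $\omega^l$, and kill the remaining length-$k$ geometric sum using $\omega\neq 1$ and $\omega^k=1$, with the hypothesis $k\mid n$ entering only through $\omega^n=1$ (your well-definedness remark is the same point the paper makes via $\omega^{n+i}=\omega^i$, and your version is in fact cleaner, since the paper's displayed sum contains a typo with $n-1$ where $k-1$ is meant).
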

\begin{proof} 
Given $k$ is a multiple of $n$, and $\omega(\ne 1)$ is a $k$-th root of unity, we have $\omega^n=1$, and $\omega^{n+i}=\omega^i$ for $i=0,1,\ldots,k-1$.
    Since for each $e\in E(C_n^k)$, the hyperedge $e=\{i,i+1,i+2,\ldots,i+(k-1)\}$ for some $i\in\mathbb{Z}_n$, the $e$-th entry of the vector $B_{C_n^k}x_{\omega}$ is $(B_{C_n^k}x_{\omega})(e)=\sum\limits_{i\in e}x_{\omega}(i)=\omega^{i}\sum\limits_{j=0}^{n-1}\omega^{j}=0$. Therefore, $B_{C_n^k}x_{\omega}=0$.
\end{proof}
 By the \Cref{fact-cycle}, for $C_{n}=C^2_n$ the rank of $B_{C_n}$ is $n-1$ if $n$ is even. As \Cref{x-w-prop} suggests, if  $n$ is a multiple of $k$, then the rank of $B_{C_n^k} $ is at most $n-1$. The next example will justify that for the hypergraph case, the rank of $B_{C_n^k}$ is not exact $n-1$, and it is at most $n-1$.
\begin{exm}\rm\label{ex-nulity>2}
    Consider the $4$ uniform cycle of length $8$ (defined in \Cref{c-4-8}). As the \Cref{x-w-prop} suggests, for $\omega=e^{\iota\frac{2\pi}{8}}$, a $4$-th root of unity, $B_{C_8^4}x_{\omega}=0$. Now, for the vector $y:\mathbb{Z}_8\to\{-1,1\}$ defined by $y(2i-1)=-1$, and $y(2i)=1$ for all $i=1,2,\ldots,8$. Since each hyperedge of $C_8^4$ contains two even and two odd numbers, the vector  $B_{C_8^4}y=0$. Now, with respect to the usual inner product on $\mathbb{C}^8$, the inner product $\langle x_{\omega},y\rangle=0$. Therefore, $x_\omega$ and $y$ are the two linearly independent vectors in the null space of $B_{C_8^4}$.
\end{exm}
In the \Cref{ex-nulity>2}, the vector $y=x_{\omega'}$, where $\omega'=-1$ is a $4$-th root of unity. This fact motivates the following result. 
\begin{prop}\label{prop-cycle}
    For some natural number $k>1$, if $n$ is a multiple of $k$, then the rank of $B_{C_n^k}$ is at most $n-k+1$.
\end{prop}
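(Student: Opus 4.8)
The plan is to produce $k-1$ linearly independent vectors in the null space of $B_{C_n^k}$, which immediately forces the nullity to be at least $k-1$ and hence the rank to be at most $n-(k-1)=n-k+1$. The raw material is already supplied by \Cref{x-w-prop}: since $n$ is a multiple of $k$, every $k$-th root of unity $\omega\neq 1$ yields a vector $x_{\omega}$ with $x_{\omega}(i)=\omega^{i}$ lying in $\ker B_{C_n^k}$. Writing $\zeta=e^{\iota 2\pi/k}$, the nontrivial $k$-th roots of unity are $\zeta,\zeta^{2},\ldots,\zeta^{k-1}$, which gives exactly $k-1$ candidate null vectors $x_{\zeta},x_{\zeta^{2}},\ldots,x_{\zeta^{k-1}}$. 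So the whole argument reduces to checking that these $k-1$ vectors are linearly independent.

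To establish independence I would assemble them as the rows of the $(k-1)\times n$ matrix $M=\bigl[\,\zeta^{\,ji}\,\bigr]$, where $j\in\{1,\ldots,k-1\}$ indexes the root $\zeta^{j}$ and $i$ ranges over a complete residue system for $\mathbb{Z}_n$. It then suffices to exhibit one invertible $(k-1)\times(k-1)$ submatrix of $M$. The natural choice is to restrict to the columns indexed by $i=0,1,\ldots,k-2$; this is legitimate because $n\ge k>k-1$ by the standing hypothesis $n\ge k$, so enough columns exist. The resulting square submatrix $\bigl[(\zeta^{j})^{i}\bigr]_{1\le j\le k-1,\,0\le i\le k-2}$ is a Vandermonde matrix built on the nodes $\zeta,\zeta^{2},\ldots,\zeta^{k-1}$. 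These nodes are pairwise distinct (as distinct $k$-th roots of unity), so the Vandermonde determinant $\prod_{1\le a<b\le k-1}(\zeta^{b}-\zeta^{a})$ is nonzero, the submatrix is invertible, and therefore the rows of $M$ are linearly independent.

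Having shown $x_{\zeta},\ldots,x_{\zeta^{k-1}}$ are $k-1$ linearly independent elements of $\ker B_{C_n^k}$, I would conclude $\dim\ker B_{C_n^k}\ge k-1$, and by rank–nullity $\operatorname{rank}B_{C_n^k}\le n-(k-1)=n-k+1$, as claimed.

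I expect the only real content to be the Vandermonde step; everything else is bookkeeping. The one point to watch is the choice of columns in $M$: I must confirm that the $k-1$ selected column indices $0,1,\ldots,k-2$ are genuinely distinct residues modulo $n$, which again is guaranteed by $n\ge k$. A secondary subtlety worth flagging is that the $x_{\omega}$ are complex vectors, so ``rank'' and ``nullity'' are being computed over $\mathbb{C}$; since $B_{C_n^k}$ is a real (indeed $0/1$) matrix its rank over $\mathbb{R}$ and over $\mathbb{C}$ coincide, so the complex bound transfers back to the statement without change.
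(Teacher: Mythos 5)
Your proposal is correct and follows essentially the same route as the paper's own proof: both take the $k-1$ null vectors $x_{\omega}$ supplied by \Cref{x-w-prop} for the nontrivial $k$-th roots of unity, verify their linear independence via a Vandermonde determinant, and conclude by rank--nullity. Your write-up is somewhat more careful than the paper's (you explicitly select the columns $i=0,1,\ldots,k-2$, justify that $n\ge k$ makes this legitimate, and note that the rank of the real matrix $B_{C_n^k}$ is unchanged when computed over $\mathbb{C}$), but the mathematical content is identical.
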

\begin{proof}
    By the \Cref{x-w-prop}, if $\omega_k=e^{\iota\frac{2\pi}{k}}$, and $\{\omega_k^0, \omega_k^1,\ldots,\omega_k^{k-1}\}$ are the $k$ distinct $k$-th root of unity then $\{x_{\omega_k^i}:i=1,2,\ldots,k-1\}$ are $k-1$ vectors in the null space of $B_{C_n^k}$. The matrix $[\omega_k^{ij}]_{i,j\in[k]}$ has Vandermonde determinant, and $\omega_k^j\ne\omega_k^{j'}$ for two distinct $i,j\in [k]$. Consequently, the collection of vector is $\{x_{\omega_k^i}:i=1,2,\ldots,k-1\}$, and the dimension of the null space of $B_{C_n^k}$ is at least $k-1$, and the rank of the matrix is at most $n-k+1$.
\end{proof}
By the \Cref{fact-cycle}, if $n$ is odd, then there is no non-zero vector in the null space of $B_{C_n}$. The \Cref{x-w-prop}, and the \Cref{prop-cycle} implies that when $n$ is a multiple of $k$, then there are at least $k-1$ linearly independent vectors in the null space of $B_{C_n^k}$. Now, we show that even if $n$ is not a multiple of $k$, the dimension of the null space of $B_{C_n^k}$ is not necessarily $0$.
\begin{exm}\label{ex-c46}
    Consider the $4$-uniform cycle on $6$ vertices $C_6^4$. Here $6$ is not a multiple of $4$. For the vector $y:\mathbb{Z}_6\to\{1,-1\}$ as $y(i)=(-1)^i$, then since each hyperedge contains two even and two odd numbers, $C_6^4y=0$.
\end{exm}
In the \Cref{ex-c46}, though $6$ is not a multiple of $4$ but their greatest common divisor, $gcd(6,4)=2$, and $-1$ is the $2$-nd root of unity. This fact motivates the following result.
\begin{thm}\label{gcd}
     For some natural number $k>1$, and for any natural number $n(\ge k)$, if the greatest common divisor, $gcd(k,n)=r$, then the rank of $B_{C_n^k}$ is at most $n-r+1$.
\end{thm}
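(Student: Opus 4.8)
The plan is to generalize the arguments behind \Cref{x-w-prop} and \Cref{prop-cycle}, replacing the role of the $k$-th roots of unity with the $r$-th roots of unity, where $r=\gcd(k,n)$. The crucial observation is that a vector $x_\omega$ serves our purpose precisely when $\omega$ is \emph{simultaneously} an $n$-th root of unity and a $k$-th root of unity: the former is needed for $x_\omega$ to be well defined as a function on $\mathbb{Z}_n$ (we need $\omega^i$ to depend only on $i\bmod n$, i.e. $\omega^n=1$), while the latter makes the geometric sum over each hyperedge vanish. Since the common solutions of $\omega^n=1$ and $\omega^k=1$ are exactly the roots of $\omega^{\gcd(k,n)}=1$, the natural candidates are the $r$ distinct $r$-th roots of unity.

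Concretely, I would fix an $r$-th root of unity $\omega\ne 1$ and first verify that $x_\omega$ is a well-defined element of $\mathbb{C}^{V(C_n^k)}$: because $r\mid n$ we have $\omega^n=(\omega^r)^{n/r}=1$, so $x_\omega(i)=\omega^i$ depends only on the residue $i\bmod n$. Next I would recompute the entries of $B_{C_n^k}x_\omega$ exactly as in \Cref{x-w-prop}: for the hyperedge $e_l=\{l,l+1,\ldots,l+k-1\}$ the computation yields
\[
(B_{C_n^k}x_\omega)(e_l)=\sum_{j=0}^{k-1}\omega^{l+j}=\omega^l\sum_{j=0}^{k-1}\omega^j=\omega^l\cdot\frac{\omega^k-1}{\omega-1}=0,
\]
where the last equality uses $\omega^k=(\omega^r)^{k/r}=1$ (since $r\mid k$) together with $\omega\ne 1$. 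Hence each such $x_\omega$ lies in the null space of $B_{C_n^k}$.

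Finally, to convert the $r-1$ vectors $\{x_\omega:\omega^r=1,\ \omega\ne 1\}$ into a lower bound on the nullity, I would invoke linear independence through a Vandermonde argument, just as in the proof of \Cref{prop-cycle}. Listing the distinct nontrivial $r$-th roots of unity as $\omega_1,\ldots,\omega_{r-1}$, the $(r-1)\times(r-1)$ submatrix of $[\,\omega_s^{\,j}\,]$ obtained from the columns $j=0,1,\ldots,r-2$ (which exist since $r-1\le n-1$) is a Vandermonde matrix with distinct nodes and is therefore nonsingular. This forces the rows, and hence the vectors $x_{\omega_1},\ldots,x_{\omega_{r-1}}$, to be linearly independent. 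Consequently the nullity of $B_{C_n^k}$ is at least $r-1$, so its rank is at most $n-(r-1)=n-r+1$.

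I do not anticipate a serious obstacle, since the core identity is essentially that of \Cref{x-w-prop}; the only points demanding care are the bookkeeping that \emph{both} divisibility conditions are genuinely used — $r\mid n$ for well-definedness on $\mathbb{Z}_n$ and $r\mid k$ for the vanishing of the geometric sum — and the verification that the $r-1$ candidate vectors are truly linearly independent rather than merely nonzero. This independence step is exactly where the Vandermonde determinant does the work.
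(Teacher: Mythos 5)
Your proposal is correct and follows essentially the same route as the paper: both take the nontrivial $r$-th roots of unity $\omega$, observe that $r\mid n$ makes $x_\omega$ well defined on $\mathbb{Z}_n$ while $r\mid k$ forces $\sum_{j=0}^{k-1}\omega^{l+j}=0$ (you via the closed-form geometric series, the paper by splitting the sum into $p=k/r$ blocks of length $r$), and then deduce nullity at least $r-1$. If anything, your write-up is slightly more complete, since you spell out the Vandermonde linear-independence step that the paper's proof of this theorem leaves implicit (deferring to the argument in \Cref{prop-cycle}).
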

\begin{proof}
    Let $\omega_r=e^{\iota\frac{2\pi}{r}}$. The complete set of the $r$-th root of unity is $\{\omega_r^i:i=1,2,\ldots,r\}$.  Consider the vectors $x_{\omega_r^i}:\mathbb{Z}_n\to\mathbb{C}$, defined by $x_{\omega_r^i}(j)=( \omega_r^i)^j$ for all $j\in\mathbb{Z}_n$, and for all $i=1,2,\ldots,r$. For any $e\in E(C_n^k)$, we have $e=\{l,l+1,\ldots,l+k-1\}$ for some $l\in\mathbb{Z}_n$. Consequently, for all $e\in E(C_n^k)$
    \begin{align*}
        &(B_{C_n^k}x_{\omega_r^i})(e)=\sum\limits_{s=0}^{k-1}(\omega_r^i)^{l+s}.
    \end{align*}
    Since $gcd(k,n)=r$, there exist two natural numbers $p,q$ with $gcd(p,q)=1$, and $k=pr,n=qr$. Being $\omega^i_r(\ne 1)$ is an $r$-th root of unity, $(\omega^i_r)^n=1$, and the sum $\sum\limits_{s=0}^{r-1}(\omega_r^i)^s=0$. Consequently,
    \begin{align*}
        &\sum\limits_{s=0}^{k-1}(\omega_r^i)^{l+s}=\sum\limits_{j=0}^{p-1}(\omega_r^i)^{l+j}\sum\limits_{s=0}^{r-1}(\omega_r^i)^s=0.
    \end{align*}
    Therefore, $B_{C_n^k}x_{\omega_r^i}=0$.
  %  Since $(\omega_r^i\ne 1)$ is an $r$-th root of unity, the sum $\sum\limits_{s=0}^{r-1}(\omega_r^i)^s=0$.
\end{proof}
If $n$ is even, then for $k=2$, by the \Cref{fact-cycle}, the dimension of the null space of $B_{C_n^k}$ is exactly $1$, but by the \Cref{prop-cycle} for $k>2$, if $n$ is a multiple of $k$, then the dimension may be more than $1$. \Cref{gcd} shows that even if $n$ is not a multiple of $k$, there may be non-zero vectors in the null space of $B_{C_n^k}$. That is, a hypergraph induces more vectors in the null space of its edge-vertex incidence matrix compared to a graph. In this section, we explore the properties of hypergraphs that lead to these additional vectors in the null space of their incidence matrices. 

\begin{df}[Sub-hypergraph induced by a set $U$ \cite{Berge-hypergraph}]
    Let $H$ be a hypergraph, and $U\subseteq V(H)$. If a hypergraph $H_U$ is such that $V(H_U)=U$, and $E(H_U)=\{e\cap U:e\in E(H), e\cap U\ne\emptyset\}$, then $H_U$
    is called the \emph{sub-hypergraph of $H$ induced by $U$}.
\end{df}
\begin{exm}\label{ex-induced}
    Consider the hypergraph $H$ with $V(H)=\{1,2,3,4,5,6,7\}$, and $E(H)=\{e_1=\{1,2,3,7\},e_2=\{2,3,4,7\},e_3=\{3,4,5,7\},e_4=\{4,5,6,7\},e_5=\{5,6,1,7\}, e_6=\{6,1,2,7\}\}$. The sub-hypergraph of $H$ induced by $U=\{1,2,3,4,5,6\}$ is a $3$-uniform cycle on $ 6$ vertices, $C_6^3$.
\end{exm}
Given any hypergraph $H$. For any $U\subseteq V(H)$ and any vector $y:U\to\mathbb{C}$, the \emph{extension} of $y$ in $V(H)$ is the vector $y':V(H)\to \mathbb{C}$ defined as $ y'(i)=y(i)$ if $i\in U$, and $y'(i)=0$ if $i\in V(H)\setminus U$.   

Suppose that a vector $y:\mathbb{Z}_6\to\mathbb{C}$ is defined as $y(j)=\omega^j$ for all $j\in\mathbb{Z}_6$, where $\omega=e^{\iota\frac{2\pi}{3}}$ is a $3$-rd root of unity. For each $e_l=\{l,l+1,l+2\}\in E(C_6^3)$, we have $(B_{C_6^3}y)(e_l)=\omega^l(1+\omega+\omega^2)=0$. Consider the hypergraph $H$, described in the \Cref{ex-induced}. We extend the vector $y$ as the vector $y':V(H)\to\mathbb{C}$ as $y'(j)=y(j)$ if $j\in V(C_6^3)$, otherwise $y'(j)=0$ for all $j\in V(H)\setminus V(C_6^3)$. For any $e\in E(H)$ with $e\cap U\ne\emptyset$, we have $ (B_{H}y')(e)=(B_{C_6^3}y)(e\cap U)=0$, and if $e\cap U=\emptyset$, then $ (B_{H}y)(e)=\sum\limits_{j\in e}y(e)=0$. Consequently, $B_{H}y'=0$. This fact motivates the following result. 
\begin{thm}\label{thm-induced}
  Let \( H \) be a hypergraph. Suppose that \( U \subseteq V(H) \) and \( H_U \) is the sub-hypergraph of \( H \) induced by \( U \). If a vector \( y: U \to \mathbb{C} \) belongs to the null space of \( B_{H_U} \), then its extension \( y' \) in \( V(H) \) belongs to the null space of \( B_H \).
\end{thm}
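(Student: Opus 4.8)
The plan is to verify directly that $B_H y' = 0$ by checking that every coordinate of the product vanishes. Since the rows of $B_H$ are indexed by the hyperedges of $H$, it suffices to show that $(B_H y')(e) = 0$ for each $e \in E(H)$. Unwinding the definition of the edge-vertex incidence matrix, this coordinate is $(B_H y')(e) = \sum_{v \in e} y'(v)$, so the whole matter reduces to evaluating these hyperedge-indexed sums.

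Next I would exploit the definition of the extension $y'$. Because $y'(v) = 0$ for every $v \in V(H) \setminus U$, the terms supported outside $U$ drop out, leaving $(B_H y')(e) = \sum_{v \in e \cap U} y(v)$. This rewrites each coordinate as a sum over the restricted hyperedge $e \cap U$, which is exactly the bridge to the induced sub-hypergraph $H_U$.

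I would then split into two cases according to whether $e$ meets $U$. If $e \cap U = \emptyset$, the displayed sum is empty and hence equals $0$. If $e \cap U \neq \emptyset$, then by the definition of the induced sub-hypergraph the set $e \cap U$ is a hyperedge of $H_U$, so the sum $\sum_{v \in e \cap U} y(v)$ is precisely the $(e \cap U)$-th coordinate of $B_{H_U} y$; since $y$ lies in the null space of $B_{H_U}$ by hypothesis, this coordinate is $0$. In either case $(B_H y')(e) = 0$, and as $e \in E(H)$ was arbitrary we conclude $B_H y' = 0$, that is, $y' \in \ker B_H$.

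This argument is essentially the bookkeeping already carried out in the worked computation preceding the statement, now phrased for a general induced sub-hypergraph; there is no genuine analytic obstacle. The one point demanding care is the legitimacy of the case split: one must observe that whenever $e \cap U \neq \emptyset$ the set $e \cap U$ genuinely indexes a row of $B_{H_U}$ (this is exactly the content of $E(H_U) = \{e \cap U : e \in E(H),\, e \cap U \neq \emptyset\}$), and that the possible collapsing of several hyperedges $e$ of $H$ into a single hyperedge $e \cap U$ of $H_U$ is harmless here, since for this direction we only need that each such $e \cap U$ corresponds to a row that $y$ annihilates.
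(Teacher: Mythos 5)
Your proposal is correct and matches the paper's own proof essentially verbatim: both verify $(B_H y')(e)=0$ hyperedge by hyperedge, splitting into the cases $e\cap U=\emptyset$ (empty sum) and $e\cap U\neq\emptyset$ (where $e\cap U\in E(H_U)$ and the coordinate equals $(B_{H_U}y)(e\cap U)=0$). Your closing remark about several hyperedges of $H$ collapsing to one hyperedge of $H_U$ being harmless is a fair extra observation, but the argument itself is the same.
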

\begin{proof}
    For all $e\in E(H)$, either $e\cap U=\emptyset$ or $e\cap U\ne\emptyset$. If $e\cap U=\emptyset$, then $y'(i)=0$ for all $i\in e$. Consequently, $(B_Hy')(e)=\sum\limits_{i\in e}y'(i)=0$. If $e\cap U\ne\emptyset$, then $ e\cap U\in E(H_U)$. Therefore, $ (B_Hy')(e)=\sum\limits_{i\in e}y'(i)=\sum\limits_{i\in e\cap U}y(i)=(B_{H_U}y)(e\cap U)=0$. Consequently, $B_{H}y'=0 $.
\end{proof}
The \Cref{thm-induced} and the \Cref{gcd} lead us to the following Corollary.
\begin{cor}\label{cycle-induced}
    If a hypergraph $H$ contains any $k$-uniform cycle on $n$ vertices $C_n^k$ as a sub-hypergraph induced by some $U\subset V(H)$ for some $k>1$ with $gcd(k,n)=r$, then the dimension of the null space of $B_H$ is at least $r-1$.
\end{cor}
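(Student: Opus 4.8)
The plan is to combine \Cref{gcd} with \Cref{thm-induced}, the only additional ingredient being the observation that extending a vector by zeros cannot destroy linear independence. The conclusion is vacuous when $r=1$, since a null space always has dimension at least $0$, so I may assume $r\ge 2$.

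The first step is to exhibit $r-1$ linearly independent vectors in the null space of $B_{C_n^k}$ itself, which is essentially the content of the proof of \Cref{gcd}. Setting $\omega_r=e^{\iota\frac{2\pi}{r}}$, the vectors $x_{\omega_r^i}\colon\mathbb{Z}_n\to\mathbb{C}$ given by $x_{\omega_r^i}(j)=(\omega_r^i)^j$ satisfy $B_{C_n^k}x_{\omega_r^i}=0$ for each $i=1,2,\ldots,r-1$, these being the $r-1$ nontrivial $r$-th roots of unity. Their linear independence follows from the Vandermonde argument used in the proof of \Cref{prop-cycle}: the matrix $[(\omega_r^i)^j]$ formed from the distinct roots $\omega_r^1,\ldots,\omega_r^{r-1}$ has full row rank, and since $n\ge k\ge r$ there are enough columns $j\in\mathbb{Z}_n$ to extract an invertible square Vandermonde block.

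Since $H_U=C_n^k$ by hypothesis, each $x_{\omega_r^i}$ lies in the null space of $B_{H_U}$, so by \Cref{thm-induced} its extension $(x_{\omega_r^i})'$ to $V(H)$ lies in the null space of $B_H$. The only genuinely new point is to verify that the $r-1$ extensions remain linearly independent over $V(H)$. This is immediate from the definition of the extension: the map $y\mapsto y'$ is linear and $y'$ restricted to $U$ equals $y$, so if $\sum_i c_i (x_{\omega_r^i})'=0$ on $V(H)$, then restricting to the coordinates indexed by $U$ gives $\sum_i c_i x_{\omega_r^i}=0$, which forces every $c_i=0$ by the independence established above. Hence the null space of $B_H$ contains $r-1$ linearly independent vectors and has dimension at least $r-1$.

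I expect no serious obstacle here: the entire difficulty has already been absorbed into \Cref{gcd} and \Cref{thm-induced}, and what remains is only the routine fact that padding with zeros on $V(H)\setminus U$ preserves the independence of vectors that were independent on $U$.
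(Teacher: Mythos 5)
Your proof is correct and follows exactly the route the paper intends: the corollary is stated as an immediate consequence of \Cref{gcd} (which supplies the $r-1$ vectors $x_{\omega_r^i}$, $i=1,\ldots,r-1$, in the null space of $B_{C_n^k}$) combined with \Cref{thm-induced} (extension by zeros). Your added verifications --- the Vandermonde independence on $r-1$ coordinates (available since $n\ge k\ge r=\gcd(k,n)$) and the fact that restriction to $U$ shows zero-padding preserves independence --- are precisely the details the paper leaves implicit (and uses again in the proof of \Cref{contraction-null}), so nothing is missing.
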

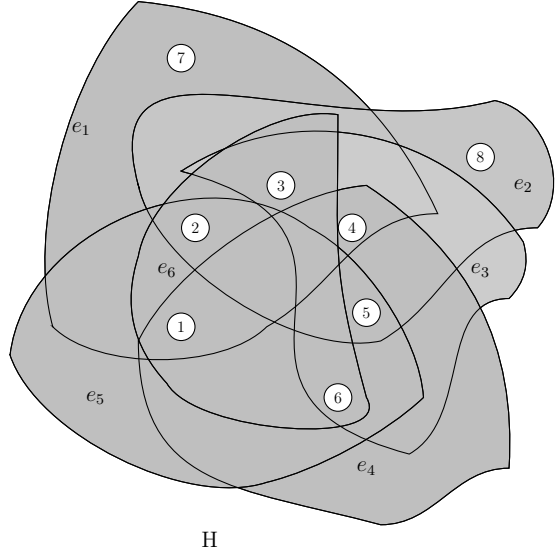
\begin{figure}[H]
    \centering
 \begin{tikzpicture}[scale=0.75]
	%\begin{pgfonlayer}{nodelayer}
		\node [style=none] (0) at (-2.25, 7.75) {};
		\node [style=none] (1) at (-3.75, 2) {};
		\node [style=none] (2) at (3, 4) {};
		\node [style=none] (3) at (0, 2) {};
		\node [style=new style 0,scale=0.5] (4) at (-1.5, 2) {1};
		\node [style=new style 0,scale=0.5] (5) at (-1.25, 3.75) {2};
		\node [style=new style 0,scale=0.5] (6) at (0.25, 4.5) {3};
		\node [style=new style 0,scale=0.5] (7) at (1.5, 3.75) {4};
		\node [style=none] (8) at (4, 6) {};
		\node [style=none] (9) at (-2.25, 4.5) {};
		\node [style=none] (10) at (4.75, 3.75) {};
		\node [style=none] (11) at (2, 1.75) {};
		\node [style=new style 0,scale=0.5] (12) at (1.25, 0.75) {6};
		\node [style=none] (13) at (4.5, 3.5) {};
		\node [style=none] (14) at (-1.5, 4.75) {};
		\node [style=none] (15) at (4.25, 2.5) {};
		\node [style=none] (16) at (2.5, -0.25) {};
		\node [style=new style 0,scale=0.5] (17) at (1.75, 2.25) {5};
		\node [style=none] (18) at (1.75, 4.5) {};
		\node [style=none] (19) at (-2.25, 1.75) {};
		\node [style=none] (20) at (4.25, -0.5) {};
		\node [style=none] (21) at (2, -1.5) {};
		\node [style=none] (22) at (0.75, 3.75) {};
		\node [style=none] (23) at (-4.5, 1.5) {};
		\node [style=none] (24) at (2.75, 0.75) {};
		\node [style=none] (25) at (0, -0.75) {};
		\node [style=none] (26) at (1.25, 5.75) {};
		\node [style=none] (27) at (1.75, 0.75) {};
		\node [style=none] (28) at (-1.75, 1) {};
		\node [style=none] (29) at (-2.25, 3.25) {};
		\node [style=new style 0,scale=0.5] (30) at (-1.5, 6.75) {7};
		\node [style=new style 0,scale=0.5] (31) at (3.75, 5) {8};
		\node [style=none,scale=0.7] (32) at (-3.25, 5.5) {$e_1$};
		\node [style=none,scale=0.7] (33) at (4.5, 4.5) {$e_2$};
		\node [style=none,scale=0.7] (34) at (3.75, 3) {$e_3$};
		\node [style=none,scale=0.7] (35) at (1.75, -0.5) {$e_4$};
		\node [style=none,scale=0.7] (36) at (-3, 0.75) {$e_5$};
		\node [style=none,scale=0.7] (37) at (-1.75, 3) {$e_6$};
		\node [style=none,scale=0.7] (38) at (-1, -1.75) {H};
	% \end{pgfonlayer}
	% \begin{pgfonlayer}{edgelayer}
		\draw[fill=gray!50!white,opacity=0.50] (1.center)
			 to [bend left, looseness=0.75] (0.center)
			 to [bend left] (2.center)
			 to [in=30, out=-180] (3.center)
			 to [bend left=45, looseness=0.75] cycle;
		\draw[fill=gray!45!white,opacity=0.50] (9.center)
			 to [in=-165, out=105, looseness=1.25] (8.center)
			 to [bend left=60] (10.center)
			 to [in=30, out=-180] (11.center)
			 to [bend left=45, looseness=0.75] cycle;
		\draw[fill=gray!40!white,opacity=0.50] (14.center)
			 to [bend left=45] (13.center)
			 to [bend left] (15.center)
			 to [in=30, out=-180] (16.center)
			 to [in=-15, out=165, looseness=1.75] cycle;
		\draw[fill=gray!50!white,opacity=0.50] (19.center)
			 to [bend left, looseness=0.75] (18.center)
			 to [bend left] (20.center)
			 to [in=0, out=-180] (21.center)
			 to [in=270, out=165, looseness=1.25] cycle;
		\draw[fill=gray!50!white,opacity=0.50] (23.center)
			 to [bend left=60] (22.center)
			 to [bend left, looseness=0.75] (24.center)
			 to [bend left=15, looseness=0.75] (25.center)
			 to [bend left=45, looseness=0.75] cycle;
		\draw[fill=gray!50!white,opacity=0.50] (27.center)
			 to [in=-60, out=-60, looseness=0.75] (28.center)
			 to [bend left] (29.center)
			 to [bend left=45, looseness=0.75] (26.center)
			 to [in=105, out=-90, looseness=1.25] cycle;
	%\end{pgfonlayer}
 \draw (1.center)
			 to [bend left, looseness=0.75] (0.center)
			 to [bend left] (2.center)
			 to [in=30, out=-180] (3.center)
			 to [bend left=45, looseness=0.75] cycle;
		\draw (9.center)
			 to [in=-165, out=105, looseness=1.25] (8.center)
			 to [bend left=60] (10.center)
			 to [in=30, out=-180] (11.center)
			 to [bend left=45, looseness=0.75] cycle;
		\draw (14.center)
			 to [bend left=45] (13.center)
			 to [bend left] (15.center)
			 to [in=30, out=-180] (16.center)
			 to [in=-15, out=165, looseness=1.75] cycle;
		\draw (19.center)
			 to [bend left, looseness=0.75] (18.center)
			 to [bend left] (20.center)
			 to [in=0, out=-180] (21.center)
			 to [in=270, out=165, looseness=1.25] cycle;
		\draw (23.center)
			 to [bend left=60] (22.center)
			 to [bend left, looseness=0.75] (24.center)
			 to [bend left=15, looseness=0.75] (25.center)
			 to [bend left=45, looseness=0.75] cycle;
		\draw (27.center)
			 to [in=-60, out=-60, looseness=0.75] (28.center)
			 to [bend left] (29.center)
			 to [bend left=45, looseness=0.75] (26.center)
			 to [in=105, out=-90, looseness=1.25] cycle;
 	\node [style=new style 0,scale=0.5] (4) at (-1.5, 2) {1};
		\node [style=new style 0,scale=0.5] (5) at (-1.25, 3.75) {2};
		\node [style=new style 0,scale=0.5] (6) at (0.25, 4.5) {3};
		\node [style=new style 0,scale=0.5] (7) at (1.5, 3.75) {4};
  \node [style=new style 0,scale=0.5] (12) at (1.25, 0.75) {6};
  \node [style=new style 0,scale=0.5] (17) at (1.75, 2.25) {5};
  \node [style=new style 0,scale=0.5] (30) at (-1.5, 6.75) {7};
		\node [style=new style 0,scale=0.5] (31) at (3.75, 5) {8};
		\node [style=none,scale=0.7] (32) at (-3.25, 5.5) {$e_1$};
		\node [style=none,scale=0.7] (33) at (4.5, 4.5) {$e_2$};
		\node [style=none,scale=0.7] (34) at (3.75, 3) {$e_3$};
		\node [style=none,scale=0.7] (35) at (1.75, -0.5) {$e_4$};
		\node [style=none,scale=0.7] (36) at (-3, 0.75) {$e_5$};
		\node [style=none,scale=0.7] (37) at (-1.75, 3) {$e_6$};
\end{tikzpicture}
   \caption{A hypergraph $H$ with \( V(H) = \{n \in \mathbb{N} : n \leq 8\} \) and \( E(H) = \{e_1 = \{1, 2, 3, 4, 7\}, e_2 = \{2, 3, 4, 5, 8\}, e_3 = \{3, 4, 5, 6\}, e_4 = \{4, 5, 6, 1\}, e_5 = \{5, 6, 1, 2\}, e_6 = \{6, 1, 2, 3\}\} \). The subset \( U = \{n \in \mathbb{N} : n \leq 6\} \subset V(H) \) of the vertex set the induced the sub-hypergraph \( H_U=C_6^4  \). The pairwise-disjoint collection of vertices $W=\{1,3,5\}$, $U=\{2,4\}$, and $V=\{7,8\}$ are such that $(|e\cap U|-|e\cap V|):(e\cap W)=\frac{1}{2}$ for all $e\in E(H)$.}
    \label{fig:gcd-ex}
\end{figure}

\begin{exm}\label{gcd-r-example}
   Let \( H \) be a hypergraph where \( V(H) = \{n \in \mathbb{N} : n \leq 8\} \) and \( E(H) = \{e_1 = \{1, 2, 3, 4, 7\}, e_2 = \{2, 3, 4, 5, 8\}, e_3 = \{3, 4, 5, 6\}, e_4 = \{4, 5, 6, 1\}, e_5 = \{5, 6, 1, 2\}, e_6 = \{6, 1, 2, 3\}\} \) (see the \Cref{fig:gcd-ex}). If we consider the subset \( U = \{n \in \mathbb{N} : n \leq 6\} \subset V(H) \) induce the sub-hypergraph \( H_U=C_6^4  \). In this case, the greatest common divisor of 4 and 6 is 2. Now, consider the vector \( y: V(H) \to \{0, 1, -1\} \) defined by \( y(i) = (-1)^i \) for all \( i \in [6] \), and $y(i)=0$ for \( i \in \{7, 8\} \). As suggested by \Cref{thm-induced} and \Cref{cycle-induced}, we find that \( B_Hy = 0 \).
\end{exm}
It is intriguing to note that in the previous example, the pair of disjoint subsets $y^{-1}(1)=\{2,4,6\}$ and $y^{-1}(-1)=\{1,3,5\}$ are such that for all $e\in E(H)$ we have $|e\cap  y^{-1}(1)|=|e\cap y^{-1}(-1)| $. This motivates the following notion.

\begin{df}[Equal partition of hyperedges]\label{equal partition defn}
   Given two disjoint subsets $U=\{u_1,u_2,\ldots,u_p\}$ and $V=\{v_1,v_2,\ldots,v_q\}$ of the vertex set $V(H)$ of a hypergraph $H$, if
$|U\cap e|=|V\cap e| $ for all $e\in E(H)$, then we refer to the pair $U$, and $V$ as an \emph{equal partition of hyperedges} in $H$.
\end{df}
\begin{exm}
  Let $H$ be a hypergraph with $V(H)=\{1,2,3,4,5\}$ and $E(H)=\{e_1,e_2,e_3\}$, where $e_1=\{1,2,3,5\}$, $e_2=\{1,3,4,5\}$, and $e_3=\{1,2,4,5\}$. Consider the pair of disjoint subsets of vertices $U=\{1,5\}$ and $V=\{2,3,4\}$. Since $|U\cap e_i|=2=|V\cap e_i|$ for all $i=1,2,3$, the pair of sets $U$ and $V$ forms an equal partition of hyperedges in $H$.
\end{exm}
In the next Theorem, we show that an equal partition of hyperedges causes a vector in the null space of the edge-vertex incidence matrix of the hypergraph. Given any hypergraph $H$, and $U\subseteq V(H)$, the characteristic function $\chi_U:V(H)\to\{0,1\}$ of $U$ is such a function that $\chi_U(v)=1$ if $v\in U$, otherwise $\chi_U(v)=0$.
\begin{thm}\label{thm-equal-partition}
   Let $H$ be a hypergraph. The pair of subsets of vertices $U$ and $V$ is an equal partition of hyperedges in $H$ if and only if $B_H(\chi_U-\chi_V)=0$.
\end{thm}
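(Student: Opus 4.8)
The plan is to prove the biconditional by directly computing the entries of the vector $B_H(\chi_U-\chi_V)$ and showing that the vanishing of each entry is precisely the equal-partition condition. The underlying principle is the same one exploited throughout this section: for any vector $x:V(H)\to\mathbb{C}$, the $e$-th coordinate of $B_Hx$ is the sum of $x$ over the vertices incident to $e$, that is, $(B_Hx)(e)=\sum_{v\in e}x(v)$. So the whole argument reduces to evaluating this sum for the specific vector $x=\chi_U-\chi_V$.

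First I would fix an arbitrary hyperedge $e\in E(H)$ and expand
$(B_H(\chi_U-\chi_V))(e)=\sum_{v\in e}\bigl(\chi_U(v)-\chi_V(v)\bigr)=\sum_{v\in e}\chi_U(v)-\sum_{v\in e}\chi_V(v)$. The key step is the observation that each of these sums counts incidences: since $\chi_U(v)=1$ precisely when $v\in U$ and $\chi_U(v)=0$ otherwise, the sum $\sum_{v\in e}\chi_U(v)$ equals the number of vertices of $e$ lying in $U$, namely $|U\cap e|$; likewise $\sum_{v\in e}\chi_V(v)=|V\cap e|$. This yields the clean identity $(B_H(\chi_U-\chi_V))(e)=|U\cap e|-|V\cap e|$, valid for every $e\in E(H)$.

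With this identity in hand, both directions follow simultaneously. The vector $B_H(\chi_U-\chi_V)$ is the zero vector if and only if all of its coordinates vanish, which by the identity means $|U\cap e|-|V\cap e|=0$, i.e.\ $|U\cap e|=|V\cap e|$, for every hyperedge $e$. By \Cref{equal partition defn}, this last condition is exactly the statement that $U$ and $V$ form an equal partition of hyperedges in $H$. Because the equivalence is obtained coordinate-wise for all hyperedges at once, neither implication needs more than reading the identity in the appropriate direction.

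I do not anticipate a genuine obstacle: the result is essentially an unpacking of the definitions, and the only thing that must be checked with any care is the entrywise identity $\sum_{v\in e}\chi_U(v)=|U\cap e|$, which is immediate from the definition of the characteristic function. I would note in passing that the disjointness of $U$ and $V$ demanded in \Cref{equal partition defn} plays no role in the computation; it is inherited from the definition but is not required to establish the equivalence.
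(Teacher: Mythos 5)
Your proof is correct and follows essentially the same route as the paper's: both reduce the statement to the entrywise identity $(B_H(\chi_U-\chi_V))(e)=|e\cap U|-|e\cap V|$ and read it in both directions. Your added observation that the disjointness of $U$ and $V$ is not actually used in the computation is accurate, though the paper does not remark on it.
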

\begin{proof}
Let $U=\{u_1,u_2,\ldots,u_p\}$ and $V=\{v_1,v_2,\ldots,v_q\}$ form an equal partition of hyperedges in $H$. Consequently, $|e\cap U|=|e\cap V|$ for all $e\in E(H)$, and that leads to $(B_H(\chi_U-\chi_V))(e)=|e\cap U|-|e\cap V|=0$ for all $e\in E(H)$.

Conversely, suppose that $B_H(\chi_U-\chi_V)=0$. For all $e\in E(H)$, since $0=(B_H(\chi_U-\chi_V))(e)=|e\cap U|-|e\cap V|$, we have $|e\cap U|=|e\cap V|$.
\end{proof}
 Consider a hypergraph $H$ with the vertex set $V(H)=\{1,2,3,4,5\}$ and the hyperedge set $E(H)=\{e_1=\{1,3,4\},e_2=\{2,4,5\}\}$. Here we have a pair of disjoint subsets $U=\{1,2\}$ and $V=\{3,4,5\}$ such that the ratio $|e\cap U|:|e\cap V|=1:2$ for all $e\in E(H)$.
For the vector $x=2\chi_U-\chi_V$, it holds that $B_Hx=0$. This example suggests the potential for further extending \Cref{thm-equal-partition}, which we will explore in the following result.
 \begin{thm}\label{ratio-r}
     Let $H$ be a hypergraph. There is a pair of disjoint collections of vertices $U$ and $V$ with the ratio $|e\cap U|:|e\cap V|=r$ for all $e\in E(H)$ if and only if $ B_H(\chi_U-r\chi_V)=0$.
 \end{thm}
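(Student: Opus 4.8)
The plan is to mirror the proof of \Cref{thm-equal-partition} (the $r=1$ case), since the only change is the scalar $r$ multiplying $\chi_V$. The single computational fact I would establish first is the action of $B_H$ on a vector: for any $x:V(H)\to\mathbb{C}$ and any $e\in E(H)$, the $e$-th coordinate of $B_H x$ is $(B_H x)(e)=\sum_{v\in e}b_{ev}\,x(v)=\sum_{v\in e}x(v)$, because $b_{ev}=1$ exactly when $v\in e$. This is the same observation already used implicitly throughout the section.

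Next I would specialize $x=\chi_U-r\chi_V$. Since $U$ and $V$ are disjoint, this vector equals $1$ on $U$, equals $-r$ on $V$, and vanishes elsewhere. Feeding it into the identity above gives, for every $e\in E(H)$,
\[
(B_H(\chi_U-r\chi_V))(e)=\sum_{v\in e}\chi_U(v)-r\sum_{v\in e}\chi_V(v)=|e\cap U|-r\,|e\cap V|.
\]
This one formula is the crux of both implications, so I would isolate it before splitting into cases.

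For the forward direction, assume the ratio $|e\cap U|:|e\cap V|=r$ holds for all $e\in E(H)$, which I read as the linear relation $|e\cap U|=r\,|e\cap V|$. Substituting into the displayed formula makes every coordinate of $B_H(\chi_U-r\chi_V)$ vanish, so $B_H(\chi_U-r\chi_V)=0$. For the converse, assume $B_H(\chi_U-r\chi_V)=0$; then the displayed formula forces $|e\cap U|-r\,|e\cap V|=0$ for every $e$, i.e.\ $|e\cap U|=r\,|e\cap V|$, which is exactly the ratio condition.

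There is no genuine obstacle here; the argument is a one-line coordinate computation once the action of $B_H$ is written out. The only point deserving care is the interpretation of the ratio symbol $|e\cap U|:|e\cap V|=r$: I would make explicit that it denotes the relation $|e\cap U|=r\,|e\cap V|$ rather than a literal quotient, so that hyperedges meeting only one of $U$, $V$ (forcing one count to be $0$) are handled uniformly without dividing by zero.
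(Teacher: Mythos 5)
Your proposal is correct and follows essentially the same route as the paper's proof: both reduce the claim to the single coordinate identity $(B_H(\chi_U-r\chi_V))(e)=|e\cap U|-r|e\cap V|$ and then read off each implication. Your explicit remark that the ratio notation should be interpreted as the linear relation $|e\cap U|=r\,|e\cap V|$ (so that hyperedges with $|e\cap V|=0$ cause no division issue) is a small clarification the paper leaves implicit, but it does not change the argument.
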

 \begin{proof}
    Suppose that the ratio $|e\cap U|:|e\cap V|=r$ for all $e\in E(H)$. Therefore, for all $e\in E(H)$, we have $|e\cap U|-r|e\cap V|=0 $, and consequently, $(B_H(\chi_U-r\chi_V))(e)=|e\cap U|-r|e\cap V|=0$. 
   
    Conversely, suppose that $B_H(\chi_U-r\chi_V)=0$. Since $(B_H(\chi_U-r\chi_V))(e)=|e\cap U|-r|e\cap V|$ for all $e\in E(H) $, it holds that $|e\cap U|:|e\cap V|=r$ for all $e\in E(H)$.
 \end{proof}
Suppose that $H$ is a hypergraph, with the vertex set $V(H)=\{1,2,3,4,5,6\}$, and the hyperedge set $E(H)=\{e_1=\{1,3,4\},e_2=\{2,4,5\},e_3=\{1,3,4,5,6\}\}$. Consider the disjoint sub-collection of vertices $U=\{3,4,5\}$, $V=\{6\}$, and $W=\{1,2\}$. Here $(|e\cap U|-|e\cap V|):|e\cap W|=2:1$, and $2\chi_W-(\chi_U-\chi_V)$ is a vector belongs to the null space of $B_H$. This instance motivates another hypergraph structure related to the null space of the incidence matrix.
\begin{thm}\label{uvw}
    Let $H$ be a hypergraph. Suppose that $U$, $V$, and $W$ are three pair-wise disjoint subsets of the vertex set $V(H)$, with $(|e\cap U|-|e\cap V|):|e\cap W|=r$ for all $e\in E(H)$ if and only if $ r\chi_{W}-(\chi_U-\chi_V)$ belongs to the null space of $B_H$.
\end{thm}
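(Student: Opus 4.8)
The plan is to reduce the statement to a single linear-algebraic identity, exactly mirroring the proofs of \Cref{thm-equal-partition} and \Cref{ratio-r}. The one fact I would rely on throughout is that for any subset $S\subseteq V(H)$ and any hyperedge $e\in E(H)$, the $e$-th coordinate of the image $B_H\chi_S$ is $(B_H\chi_S)(e)=\sum_{i\in e}\chi_S(i)=|e\cap S|$. This turns every incidence-matrix computation into a bookkeeping statement about intersection sizes.

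First I would fix an arbitrary hyperedge $e\in E(H)$ and, using linearity of $B_H$ together with the displayed fact, compute the $e$-th coordinate of $B_H\big(r\chi_W-(\chi_U-\chi_V)\big)$. This gives
\[
\big(B_H(r\chi_W-(\chi_U-\chi_V))\big)(e)=r\,|e\cap W|-\big(|e\cap U|-|e\cap V|\big).
\]
Next I would observe that the hypothesis $(|e\cap U|-|e\cap V|):|e\cap W|=r$ is, by the meaning of a ratio, the scalar identity $|e\cap U|-|e\cap V|=r\,|e\cap W|$. Substituting this into the displayed expression makes the $e$-th coordinate vanish; since $e$ was arbitrary, $B_H(r\chi_W-(\chi_U-\chi_V))=0$, which proves the forward direction. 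For the converse I would read the same computation backwards: if $r\chi_W-(\chi_U-\chi_V)$ lies in the null space of $B_H$, then for every $e\in E(H)$ the coordinate computed above is zero, i.e.\ $|e\cap U|-|e\cap V|=r\,|e\cap W|$, which is precisely the ratio condition.

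I do not expect any genuine obstacle here: the argument is a routine linearity computation of the same shape as the two preceding theorems, and the equivalence is visible from a single coordinatewise identity that is manifestly reversible. The only point deserving a word of care is the interpretation of the ratio notation—I would make explicit that $(\,\cdot\,):(\,\cdot\,)=r$ is being read as the linear relation $|e\cap U|-|e\cap V|=r\,|e\cap W|$, since this is exactly what allows both directions to be obtained from the one computation. I would also note that the pairwise-disjointness hypothesis on $U$, $V$, and $W$ is not strictly required for the algebra (the identity holds coordinatewise regardless), but it keeps the three characteristic vectors and the underlying combinatorial structure meaningful, so I would retain it as stated.
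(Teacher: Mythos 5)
Your proposal is correct and follows essentially the same route as the paper's own proof: both compute the $e$-th coordinate $\big(B_H(r\chi_W-(\chi_U-\chi_V))\big)(e)=r\,|e\cap W|-\big(|e\cap U|-|e\cap V|\big)$ and read the single reversible identity in both directions. Your explicit remarks on the interpretation of the ratio notation and on the dispensability of the disjointness hypothesis for the algebra are fine clarifications but do not change the argument.
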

\begin{proof}
    Suppose that $(|e\cap U|-|e\cap V|):|e\cap W|=r$ for all $e\in E(H)$. Therefore, $B_H(r\chi_{W}-(\chi_U-\chi_V))(e)=r|e\cap W|-(|e\cap U|-|e\cap V|)=0$ for all $e\in E(H)$. Consequently,  $B_H(r\chi_{W}-(\chi_U-\chi_V))=0$. 
    
    Conversely, suppose that $ r\chi_{W}-(\chi_U-\chi_V)$ belongs to the null space of $B_H$. Thus, for all $e\in E(H)$ it holds that $0=B_H(r\chi_{W}-(\chi_U-\chi_V))(e)=r|e\cap W|-(|e\cap U|-|e\cap V|)$. Therefore, $(|e\cap U|-|e\cap V|):|e\cap W|=r$ for all $e\in E(H)$.
\end{proof}
Consider the hypergraph $H$ described in the \Cref{gcd-r-example} (\Cref{fig:gcd-ex}). The pairwise-disjoint collection of vertices $W=\{1,3,5\}$, $U=\{2,4\}$, and $V=\{7,8\}$ are such that $(|e\cap U|-|e\cap V|):(e\cap W)=\frac{1}{2}$ for all $e\in E(H)$. Therefore, by the \Cref{uvw}, the vector $\frac{1}{2}\chi_{W}-(\chi_U-\chi_V)$ belongs to the null space of $B_H$.

Consider a hypergraph $H$ with vertex set $V(H)=\{1,2,3,4,5,6\}$ and hyperedge set $E(H)=\{e_1=\{1,5,3,6\},e_2=\{1,2\},e_3=\{2,6\},e_4=\{3,4\},e_5=\{4,5,6\}\}$. For the vector $y:V(H)\to\mathbb{R}$ defined by $y(1)=-y(2)=y(6)=1$, $-y(3)=y(4)=\frac{1}{2}$, and $y(5)=-\frac{3}{2}$, we have $B_Hy=0$. It is intriguing to note that $y$ belongs to the null space of $B_H$ because any hyperedges $e\in E(H)$ satisfy the following equation:
$$|e\cap \{1,6\}|-|e\cap \{2\}|+\frac{1}{2}|e\cap \{4\}|-\frac{1}{2}|e\cap\{3\}|-\frac{3}{2}|e\cap\{5\}|=0.$$ This fact motivates a more general scenario than that of the \Cref{ratio-r} and the \Cref{uvw}.

\begin{thm}\label{gen-ker}
   Let $H$ be a hypergraph. For any hyperedge $e\in E(H)$, a collection of pairwise disjoint subsets of the vertex set $U_1,\ldots,U_n$ satisfy the equation $\sum_{i=1}^nc_i|e\cap U_i|=0$ if and only if $B_H(\sum\limits_{i=1}^nc_i\chi_{U_i})=0$ where $c_i\in \mathbb{C}$ for some $i=1,\ldots,n$.
\end{thm}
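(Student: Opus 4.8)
The plan is to reduce the statement to the single linear-algebraic identity that already underlies \Cref{thm-equal-partition}, \Cref{ratio-r}, and \Cref{uvw}: the explicit formula for the entries of $B_H x$ when $x$ is a complex combination of characteristic functions. Since the theorem is an ``if and only if'' in which both sides compare the same quantity hyperedge-by-hyperedge, once this formula is in hand both implications drop out immediately.

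First I would fix an arbitrary hyperedge $e \in E(H)$ and compute the $e$-th coordinate of the vector $B_H\bigl(\sum_{i=1}^n c_i \chi_{U_i}\bigr)$. By the definition of the edge-vertex incidence matrix, this coordinate equals $\sum_{v \in e} \bigl(\sum_{i=1}^n c_i \chi_{U_i}\bigr)(v)$. Because all the index sets are finite, I may interchange the order of summation and use $\sum_{v \in e} \chi_{U_i}(v) = |e \cap U_i|$ to obtain
$$\Bigl(B_H\bigl(\textstyle\sum_{i=1}^n c_i \chi_{U_i}\bigr)\Bigr)(e) = \sum_{i=1}^n c_i\,|e \cap U_i|.$$
This identity is the heart of the argument and is the only computation required.

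With the identity established, the forward direction is immediate: if $\sum_{i=1}^n c_i |e \cap U_i| = 0$ for every $e \in E(H)$, then every coordinate of $B_H\bigl(\sum_{i=1}^n c_i \chi_{U_i}\bigr)$ vanishes, so the vector is zero. Conversely, if $B_H\bigl(\sum_{i=1}^n c_i \chi_{U_i}\bigr) = 0$, then each of its coordinates is zero, which by the identity forces $\sum_{i=1}^n c_i |e \cap U_i| = 0$ for all $e \in E(H)$.

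There is no genuine obstacle here; the result is the common generalization of the preceding theorems and follows from the same linearity principle, with the interchange of two finite summations being the only (trivial) point to justify. One subtlety worth flagging in the write-up is that the hypothesis of pairwise disjointness of the $U_i$ is not actually needed for the coordinate formula to hold; it is retained to match the structural setting of \Cref{ratio-r} and \Cref{uvw} and to ensure that $\sum_i c_i \chi_{U_i}$ assigns each vertex of $U_i$ the unambiguous value $c_i$, which is what makes the earlier special cases ($\chi_U-\chi_V$, $\chi_U-r\chi_V$, and $r\chi_W-(\chi_U-\chi_V)$) instances of this theorem.
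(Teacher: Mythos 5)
Your proof is correct and takes essentially the same route as the paper's: both rest on the single coordinate identity $\bigl(B_H(\sum_{i=1}^n c_i\chi_{U_i})\bigr)(e)=\sum_{i=1}^n c_i|e\cap U_i|$ and then read off the two implications entry by entry, with your write-up merely making explicit the interchange of finite sums that the paper leaves implicit. Your side observation that pairwise disjointness is never actually used in the argument is also accurate --- the paper's proof likewise never invokes it.
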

\begin{proof}
   Let us assume we have a collection of subsets \( U_1, \ldots, U_n \) that are pairwise disjoint and satisfy the equation \(\sum\limits_{i=1}^n c_i |e \cap U_i| = 0\) for any hyperedge \(e \in E(H)\). This implies that for any hyperedge \(e\), the expression \((B_H(\sum\limits_{i=1}^n c_i \chi_{U_i}))(e)=\sum\limits_{i=1}^n c_i |e \cap U_i| = 0\). Therefore, we can conclude that \(B_H(\sum\limits_{i=1}^n c_i \chi_{U_i}) = 0\).

Conversely, if \(B_H(\sum\limits_{i=1}^n c_i \chi_{U_i}) = 0\), then for every hyperedge \(e \in E(H)\), it must be true that \(\sum\limits_{i=1}^n c_i |e \cap U_i| = (B_H(\sum\limits_{i=1}^n c_i \chi_{U_i}))(e) = 0\).
\end{proof}
Given any vertex $v$ in a hypergraph $H$, the \emph{star of the vertex} $v$ is $E_v(H)=\{e\in E(H):v\in e\}$.
Now we revisit the concept of a unit in a hypergraph as introduced in \cite{unit}. In a hypergraph $H$, units are the maximal collections of vertices with the same stars. Unit is another hypergraph structure, which is responsible for the vectors in the null space of the edge-vertex incidence matrix. 
\begin{df}[Unit]{\rm\cite[Definition 3.1]{unit}}
    Let \(H\) be a hypergraph. Consider the equivalence relation \(\mathcal{R}_u(H)\) on the vertex set \(V(H)\) given by \[\mathcal{R}_u(H) = \{(u, v) \in V(H) \times V(H) : E_u(H) = E_v(H)\}.\] Each equivalence class under \(\mathcal{R}_u(H)\) is known as a \emph{unit}. For every unit \(W_E \subseteq V(H)\), there exists a subset \(E \subseteq E(H)\) such that \(E_v(H) = E\) for all \(v \in W_E\). This subset \(E\) is called the \emph{generator} of the unit \(W_E\).
\end{df}
 We denote the complete collection of units in $H$ as $\mathfrak{U}(H)$. Given a hypergraph $H$, the \emph{unit-contraction} of $H$ is a hypergraph $H/\mathcal{R}_u(H)$ with \[V(H/\mathcal{R}_u(H))=\mathfrak{U}(H),\]
and \[E(H/\mathcal{R}_u(H))=\{\Tilde{e}=\{W_{E_v(H)}:v\in e\}: e\in E(H)\}.\] 

For any $e\in E(H)$, $\Tilde{e}$ is a set, and if $E_u(H)=E=E_v(H)$ for two $u,v\in e$, then $\Tilde{e}$ contains the unit $W_E$ containing $u,v$. However, to avoid possible confusion, it is important to clarify that being a set, $\Tilde{e}$ contains $W_E$ just once and does not contain two distinct instances of $W_E$ for both $u$ and $v$ individually. 
\begin{exm}\label{ex-unit}\rm
    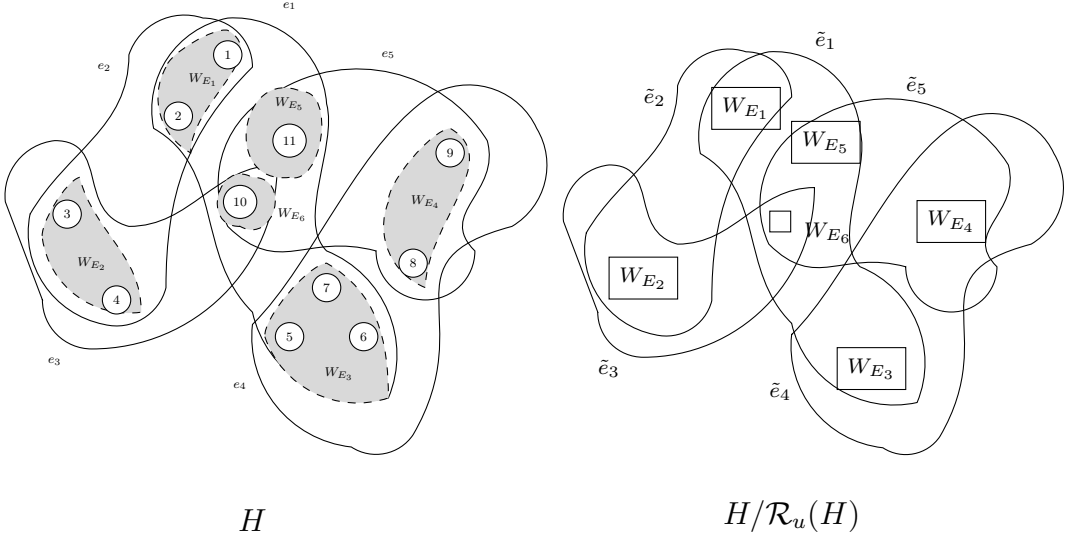
\begin{figure}[H]
        \centering
        \begin{subfigure}{0.49\textwidth}
        \begin{tikzpicture}[scale=0.65]
	%\begin{pgfonlayer}{nodelayer}
		\node [style=none] (11) at (4, 5) {};
		\node [style=none] (12) at (6, 4.25) {};
		\node [style=none] (13) at (5.5, 2.25) {};
		\node [style=none] (14) at (0.25, 0.25) {};
		\node [style=none] (15) at (2.25, -2.25) {};
		\node [style=none] (16) at (3.5, -2) {};
		\node [style=none] (17) at (-1.75, 4.25) {};
		\node [style=none] (18) at (-0.25, 6.5) {};
		\node [style=none] (19) at (1.75, 4.75) {};
		\node [style=none] (20) at (0.25, 0.5) {};
		\node [style=none] (21) at (3, -1.25) {};
		\node [style=none] (22) at (1.75, 1.75) {};
		\node [style=none] (24) at (-4.75, 2.75) {};
		\node [style=none] (25) at (-3.75, 4) {};
		\node [style=none] (26) at (-2.25, 2.25) {};
		\node [style=none] (27) at (-4, 0.75) {};
		\node [style=none] (28) at (-3, -0.25) {};
		\node [style=none] (29) at (0.75, 3.5) {};
		\node [style=none] (30) at (-0.25, 2.25) {};
		\node [style=none] (31) at (0.75, 5) {};
		\node [style=none] (32) at (5, 4) {};
		\node [style=none] (33) at (2.75, 1.75) {};
		\node [style=none] (34) at (3.75, 0.75) {};
		\node [style=none] (35) at (4.75, 1.75) {};
		\node [style=none] (36) at (-2.25, 5.75) {};
		\node [style=none] (37) at (-0.75, 6.5) {};
		\node [style=none] (38) at (0.25, 5.5) {};
		\node [style=none] (39) at (-4.25, 2.5) {};
		\node [style=none] (40) at (-2.75, 0.25) {};
		\node [style=none] (41) at (-1.5, 1) {};
		\node [style=none] (42) at (0.5, 0) {};
		\node [style=none] (43) at (1.75, 1.5) {};
		\node [style=none] (44) at (3, -1.25) {};
		\node [style=none] (45) at (3, 2.75) {};
		\node [style=none] (46) at (4.25, 4.25) {};
		\node [style=none] (47) at (3.75, 1) {};
		\node [style=none] (48) at (-4, 2) {};
		\node [style=none] (49) at (-3.25, 3.25) {};
		\node [style=none] (50) at (-2, 0.5) {};
		\node [style=none] (51) at (-1.5, 5.5) {};
		\node [style=none] (52) at (-0.25, 6.25) {};
		\node [style=none] (53) at (-1, 3.75) {};
		\node [style=none] (56) at (0.25, 3.25) {};
		\node [style=none] (57) at (0, 2.25) {};
		\node [style=none] (58) at (1.25, 5) {};
		\node [style=none] (59) at (1, 3.25) {};
		\node [style=none] (67) at (0.75, 2.25) {};
		\node [style=none] (68) at (1, -1.25) {};
		\node [style=none] (69) at (3.75, 2.75) {};
		\node [style=none] (70) at (2, 4.25) {};
		\node [style=none] (71) at (1, 2.75) {};
	%\end{pgfonlayer}
	%\begin{pgfonlayer}{edgelayer}
		\draw (14.center)
			 to [in=-150, out=45, looseness=0.75] (11.center)
			 to [bend left=45] (12.center)
			 to [bend left=45] (13.center)
			 to [in=60, out=-165, looseness=1.50] (16.center)
			 to [bend left=45] (15.center)
			 to [bend left=45] cycle;
		\draw (20.center)
			 to [in=-30, out=135] (17.center)
			 to [bend left=45] (18.center)
			 to [bend left=45] (19.center)
			 to [in=135, out=-75, looseness=0.75] (22.center)
			 to [bend left=45] (21.center)
			 to [bend left=45] cycle;
		\draw (27.center)
			 to (24.center)
			 to [bend left=45] (25.center)
			 to [in=165, out=0, looseness=1.25] (26.center)
			 to [in=180, out=0] (29.center)
			 to [bend left=45] (28.center)
			 to [bend left=45] cycle;
		\draw (33.center)
			 to [in=-45, out=165, looseness=1.25] (30.center)
			 to [bend left=45] (31.center)
			 to [bend left=45] (32.center)
			 to [in=135, out=-75, looseness=1.25] (35.center)
			 to [bend left=45] (34.center)
			 to [bend left=45] cycle;
		\draw (39.center)
			 to [in=-90, out=60] (36.center)
			 to [bend left=45] (37.center)
			 to [bend left=45] (38.center)
			 to [in=90, out=-135] (41.center)
			 to [bend left=45] (40.center)
			 to [bend left=45] cycle;
		\draw [style=new edge style 1,fill=gray!30!white] (42.center)
			 to [bend right=45] (44.center)
			 to [in=-30, out=90] (43.center)
			 to [bend right=45, looseness=0.50] cycle;
		\draw [style=new edge style 1,fill=gray!30!white] (45.center)
			 to [bend right=45] (47.center)
			 to [in=-30, out=90] (46.center)
			 to [bend right=45, looseness=0.50] cycle;
		\draw [style=new edge style 1,fill=gray!30!white] (48.center)
			 to [bend right=45] (50.center)
			 to [in=-75, out=90] (49.center)
			 to [bend right=45, looseness=0.50] cycle;
		\draw [style=new edge style 1,fill=gray!30!white] (51.center)
			 to [bend right=45] (53.center)
			 to [in=-30, out=90] (52.center)
			 to [bend right=45, looseness=0.50] cycle;
		\draw [style=new edge style 1,fill=gray!30!white] (57.center)
			 to [bend left=105, looseness=2.00] (56.center)
			 to [bend left=105, looseness=2.00] cycle;
		\draw [style=new edge style 1,fill=gray!30!white] (59.center)
			 to [bend left=105, looseness=2.00] (58.center)
			 to [in=-15, out=0] cycle;
    	\node [style=new style 0,scale=0.6] (0) at (-0.25, 5.75) {\tiny 1};
		\node [style=new style 0,scale=0.6] (1) at (-1.25, 4.5) {\tiny 2};
		\node [style=new style 0,scale=0.6] (2) at (1, 4) {\tiny  11};
		\node [style=new style 0,scale=0.6] (3) at (-3.5, 2.5) {\tiny  3};
		\node [style=new style 0,scale=0.6] (4) at (-2.5, 0.75) {\tiny  4};
		\node [style=new style 0,scale=0.6] (5) at (1.75, 1) {\tiny  7};
		\node [style=new style 0,scale=0.6] (6) at (1, 0) {\tiny  5};
		\node [style=new style 0,scale=0.6] (7) at (2.5, 0) {\tiny 6};
		\node [style=new style 0,scale=0.6] (8) at (4.25, 3.75) {\tiny  9};
		\node [style=new style 0,scale=0.6] (9) at (3.5, 1.5) {\tiny 8};
            \node [style=new style 0,scale=0.6] (23) at (0, 2.75) {\tiny  10};
            	\node [style=none,scale=0.6] (60) at (1, 6.75) {\tiny $e_1$};
		\node [style=none,scale=0.6] (61) at (-2.75, 5.5) {\tiny  $e_2$};
		\node [style=none,scale=0.6] (62) at (-3.75, -0.5) {\tiny $e_3$};
		\node [style=none,scale=0.6] (63) at (0, -1) {\tiny $e_4$};
		\node [style=none,scale=0.6] (64) at (3, 5.75) {\tiny  $e_5$};
		\node [style=none,scale=0.6] (65) at (-0.75, 5.25) {\tiny  $W_{E_1}$};
		\node [style=none,scale=0.6] (66) at (-3, 1.5) {\tiny  $W_{E_2}$};
  \node [style=none,scale=0.6] (72) at (2, -0.75) {\tiny  $W_{E_3}$};
		\node [style=none,scale=0.6] (73) at (3.75, 2.75) {\tiny  $W_{E_4}$};
		\node [style=none,scale=0.6] (74) at (1, 4.75) {\tiny  $W_{E_5}$};
		\node [style=none,scale=0.6] (75) at (1.05, 2.5) {\tiny  $W_{E_6}$};
		\node [style=none] (76) at (0.25, -3.75) {$H$};
	%\end{pgfonlayer}
\end{tikzpicture}
         \caption{A hypergraph $H$ wherein units are identified within the shaded regions.}
         \label{fig:hyp-unit}
        \end{subfigure}
        \begin{subfigure}{0.5\textwidth}
        \begin{tikzpicture}[scale=0.6]
	%\begin{pgfonlayer}{nodelayer}
		\node [style=none] (11) at (4, 5) {};
		\node [style=none] (12) at (6, 4.25) {};
		\node [style=none] (13) at (5.5, 2.25) {};
		\node [style=none] (14) at (0.25, 0.25) {};
		\node [style=none] (15) at (2.25, -2.25) {};
		\node [style=none] (16) at (3.5, -2) {};
		\node [style=none] (17) at (-1.75, 4.25) {};
		\node [style=none] (18) at (-0.25, 6.5) {};
		\node [style=none] (19) at (1.75, 4.75) {};
		\node [style=none] (20) at (0.25, 0.5) {};
		\node [style=none] (21) at (3, -1.25) {};
		\node [style=none] (22) at (1.75, 1.75) {};
		\node [style=none] (24) at (-4.75, 2.75) {};
		\node [style=none] (25) at (-3.75, 4) {};
		\node [style=none] (26) at (-2.25, 2.25) {};
		\node [style=none] (27) at (-4, 0.75) {};
		\node [style=none] (28) at (-3, -0.25) {};
		\node [style=none] (29) at (0.75, 3.5) {};
		\node [style=none] (30) at (-0.25, 2.25) {};
		\node [style=none] (31) at (0.75, 5) {};
		\node [style=none] (32) at (5, 4) {};
		\node [style=none] (33) at (2.75, 1.75) {};
		\node [style=none] (34) at (3.75, 0.75) {};
		\node [style=none] (35) at (4.75, 1.75) {};
		\node [style=none] (36) at (-2.25, 5.75) {};
		\node [style=none] (37) at (-0.75, 6.5) {};
		\node [style=none] (38) at (0.25, 5.5) {};
		\node [style=none] (39) at (-4.25, 2.5) {};
		\node [style=none] (40) at (-2.75, 0.25) {};
		\node [style=none] (41) at (-1.5, 1) {};
		\node [style=none] (42) at (0.5, 0) {};
		\node [style=none] (43) at (1.75, 1.5) {};
		\node [style=none] (44) at (3, -1.25) {};
		\node [style=none] (45) at (3, 2.75) {};
		\node [style=none] (46) at (4.25, 4.25) {};
		\node [style=none] (47) at (3.75, 1) {};
		\node [style=none] (48) at (-4, 2) {};
		\node [style=none] (49) at (-3.25, 3.25) {};
		\node [style=none] (50) at (-2, 0.5) {};
		\node [style=none] (51) at (-1.5, 5.5) {};
		\node [style=none] (52) at (-0.25, 6.25) {};
		\node [style=none] (53) at (-1, 3.75) {};
		\node [style=none] (56) at (0.25, 3.25) {};
		\node [style=none] (57) at (0, 2.25) {};
		\node [style=none] (58) at (1.25, 5) {};
		\node [style=none] (59) at (1, 3.25) {};
		\node [style=none] (67) at (0.75, 2.25) {};
		\node [style=none] (68) at (1, -1.25) {};
		\node [style=none] (69) at (3.75, 2.75) {};
		\node [style=none] (70) at (2, 4.25) {};
		\node [style=none] (71) at (1, 2.75) {};
	%\end{pgfonlayer}
	%\begin{pgfonlayer}{edgelayer}
		\draw (14.center)
			 to [in=-150, out=45, looseness=0.75] (11.center)
			 to [bend left=45] (12.center)
			 to [bend left=45] (13.center)
			 to [in=60, out=-165, looseness=1.50] (16.center)
			 to [bend left=45] (15.center)
			 to [bend left=45] cycle;
		\draw (20.center)
			 to [in=-30, out=135] (17.center)
			 to [bend left=45] (18.center)
			 to [bend left=45] (19.center)
			 to [in=135, out=-75, looseness=0.75] (22.center)
			 to [bend left=45] (21.center)
			 to [bend left=45] cycle;
		\draw (27.center)
			 to (24.center)
			 to [bend left=45] (25.center)
			 to [in=165, out=0, looseness=1.25] (26.center)
			 to [in=180, out=0] (29.center)
			 to [bend left=45] (28.center)
			 to [bend left=45] cycle;
		\draw (33.center)
			 to [in=-45, out=165, looseness=1.25] (30.center)
			 to [bend left=45] (31.center)
			 to [bend left=45] (32.center)
			 to [in=135, out=-75, looseness=1.25] (35.center)
			 to [bend left=45] (34.center)
			 to [bend left=45] cycle;
		\draw (39.center)
			 to [in=-90, out=60] (36.center)
			 to [bend left=45] (37.center)
			 to [bend left=45] (38.center)
			 to [in=90, out=-135] (41.center)
			 to [bend left=45] (40.center)
			 to [bend left=45] cycle;
            \node [rectangle,draw] (23) at (0, 2.75) {};
            	\node [style=none] (60) at (1, 6.75) {\tiny $\tilde e_1$};
		\node [style=none] (61) at (-2.75, 5.5) {\tiny  $\tilde e_2$};
		\node [style=none] (62) at (-3.75, -0.5) {\tiny $\tilde e_3$};
		\node [style=none] (63) at (0, -1) {\tiny $\tilde e_4$};
		\node [style=none] (64) at (3, 5.75) {\tiny  $\tilde e_5$};
		\node [rectangle,draw] (65) at (-0.75, 5.25) {\tiny  $W_{E_1}$};
		\node [rectangle,draw] (66) at (-3, 1.5) {\tiny  $W_{E_2}$};
  \node [rectangle,draw] (72) at (2, -0.5) {\tiny  $W_{E_3}$};
		\node [rectangle,draw] (73) at (3.75, 2.75) {  \tiny$W_{E_4}$};
		\node [rectangle,draw] (74) at (1, 4.5) {\tiny  $W_{E_5}$};
		\node [none] (75) at (1.05, 2.5) {\tiny  $W_{E_6}$};
		\node [style=none] (76) at (0.25, -3.75) {$H/\mathcal{R}_u(H)$};
	%\end{pgfonlayer}
\end{tikzpicture}
         \caption{Units of $H$ become vertices in $H/\mathcal{R}_u(H)$, the unit-contraction of $H$. 
         }
         \label{fig:unit-contraction}
        \end{subfigure}
        \caption{Units and unit contraction of a hypergraph $H$  with $V(H)=\{1,2,\ldots,10,11\}$, and $E(H)=\{ e_1=\{1,2,5,6,7,10,11,\},e_2=\{1,2,3,4\}, e_3=\{3,4,10\},e_4=\{5,6,7,8,9\},e_5=\{8,9,10,11\}\}$.}
        \label{fig:unit and contraction}
    \end{figure}
  Consider the hypergraph $H$ with $V(H)=\{1,2,\ldots,10,11\}$ and $E(H)=\{e_1,e_2,e_3,e_4,e_5\}$ (see \Cref{fig:hyp-unit}), where $ e_1=\{1,2,5,6,7,10,11,\}$, $e_2=\{1,2,3,4\}, e_3=\{3,4,10\}$,
$e_4=\{5,6,7,8,9\}$, $e_5=\{8,9,10,11\}$. The units in $H$ are $ W_{E_1}=\{1,2\}$, $W_{E_2}=\{3,4\}$, $W_{E_3}=\{5,6,7\}$, $W_{E_4}=\{8,9\}$, $W_{E_5}=\{11\}$, $W_{E_6}=\{10\}$. The corresponding generating sets are $E_1=\{e_1,e_2\}$, $E_2=\{e_2,e_3\}$, $E_3=\{e_1,e_4\}$, $E_4=\{e_4,e_5\}$, $E_5=\{e_1,e_5\}$, $E_6=\{e_1,e_3,e_5\}$.
 \end{exm}
A hypergraph $H$ is called \emph{non-contractible} if each unit is a singleton set. If $H$ is non-contractible, then $H$ is isomorphic to $H/\mathcal{R}_u(H)$.
Two hypergraphs $H$ and $H'$ are called \emph{isomorphic} if there exists a bijection $f:V(H)\to V(H')$ such that $e\in E(H)$ if and only if $\{f(v):v\in e\}\in E(H')$.
\begin{prop}\label{contraction-iso}
   Let $H$ be a hypergraph, and $U\subseteq V(H)$ be such that $U$ contains exactly one vertex from each unit of $H$. The sub-hypergraph of $H$ induced by $U$, that is, $H_U$, is isomorphic to $H/\mathcal{R}_u(H)$, the unit-contraction of $H$.
\end{prop}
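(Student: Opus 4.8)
The plan is to exhibit an explicit bijection between the two vertex sets and verify that it preserves incidence. Define $f : U \to \mathfrak{U}(H)$ by sending each $u \in U$ to the unique unit that contains it, i.e. $f(u) = W_{E_u(H)}$. Since the units partition $V(H)$ and $U$ contains exactly one vertex from each unit, the map $f$ is a bijection between $V(H_U) = U$ and $V(H/\mathcal{R}_u(H)) = \mathfrak{U}(H)$.

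The crucial structural observation, which I would establish first, is that every hyperedge of $H$ is a union of whole units: if a unit $W$ satisfies $W \cap e \neq \emptyset$ for some $e \in E(H)$, then $W \subseteq e$. Indeed, pick $v \in W \cap e$; then $e \in E_v(H)$, and by definition of a unit every $w \in W$ has $E_w(H) = E_v(H) \ni e$, so $w \in e$. Consequently $e \cap U$ is exactly the set of representatives of the units contained in $e$, and applying $f$ yields $\{f(v) : v \in e \cap U\} = \{W : W \subseteq e\} = \{W_{E_v(H)} : v \in e\} = \Tilde{e}$.

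It then remains to check the isomorphism condition. For the forward direction, every hyperedge of $H_U$ has the form $e \cap U$ for some $e \in E(H)$; note $e \cap U \neq \emptyset$ automatically, since $e$ is nonempty and, by the union-of-units observation, contains the $U$-representative of any unit it meets. The previous paragraph then gives $\{f(v) : v \in e \cap U\} = \Tilde{e} \in E(H/\mathcal{R}_u(H))$. For the converse, suppose $S \subseteq U$ satisfies $\{f(v) : v \in S\} \in E(H/\mathcal{R}_u(H))$, say $\{f(v) : v \in S\} = \Tilde{e}$. Because $f$ is injective, $S$ is recovered from its image as $S = \{f^{-1}(W) : W \in \Tilde{e}\}$, and the same description gives $e \cap U$; hence $S = e \cap U \in E(H_U)$. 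Therefore $f$ is a hypergraph isomorphism.

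The only subtle point, and the step I expect to carry the whole argument, is the union-of-units lemma: it is what turns the raw set-theoretic intersection $e \cap U$ into a faithful copy of $\Tilde{e}$. Once this is in place, the bijectivity of $f$ on vertices transfers immediately to a bijection on hyperedges, and the two incidence relations coincide, so no further estimates or case analysis are required.
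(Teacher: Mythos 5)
Your proposal is correct and takes essentially the same route as the paper: the same bijection $f(u)=W_{E_u(H)}$ on representatives and the same identification $\{f(v):v\in e\cap U\}=\tilde{e}$, checked in both directions. The only difference is presentational: you isolate as an explicit lemma the fact that a unit meeting a hyperedge lies entirely inside it, which the paper uses implicitly when it asserts $e\cap U\neq\emptyset$ and $\{W_{E_v(H)}:v\in e\}=\{W_{E_v(H)}:v\in e\cap U\}$.
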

\begin{proof}
    Consider the function $f:V(H_U)\to V(H/\mathcal{R}_u(H))$ defined by $f(u)=W_{E_u(H)}$, the unit containing the vertex $u$. Given any $\tilde{e}\in E(H/\mathcal{R}_u(H))$, there exist $e\in E(H)$ such that $\tilde{e}=\{W_{E_v(H)}:v\in e\}$. Since the set $U$ contains exactly one vertex from each unit of $H$, the intersection $e\cap U\ne \emptyset$ for all $e\in E(H)$, and therefore, $E(H_U)=\{e\cap U:e\in E(H)\}$. For any $e\in E(H)$, the set $\tilde{e}=\{W_{E_v(H)}:v\in e\}=\{ W_{E_v(H)}:v\in e\cap U\}=\{f(v):v\in e\cap U\}$. Therefore, $e\in E(H_U)$ if and only if $\{f(v):v\in e\}\in E(H/\mathcal{R}_u(H))$.
\end{proof}
Consider the hypergraph $H$ illustrated in the \Cref{fig:hyp-unit}. Suppose that $U=\{1,3,5,8,10,11\}$. The subset $U$ contains exactly one vertex from each unit. Therefore, the sub-hypergraph of $H$ induced by $U$, that is, $H_U$, is isomorphic to $H/\mathcal{R}_u(H) $ (illustrated in the \Cref{fig:unit-contraction}).
The \Cref{thm-induced} and the \Cref{contraction-iso} lead us to the following result.
\begin{thm}\label{contraction-null}
    Let $H$ be a hypergraph. The dimension of the null space of $B_H$ is at least the dimension of the null space of $B_{H/\mathcal{R}_u(H)}$.
\end{thm}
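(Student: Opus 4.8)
The plan is to combine \Cref{contraction-iso} and \Cref{thm-induced} by transporting a basis of the null space of $B_{H/\mathcal{R}_u(H)}$ up to $V(H)$ via an induced sub-hypergraph. First I would choose a set $U \subseteq V(H)$ containing exactly one vertex from each unit of $H$. By \Cref{contraction-iso}, the induced sub-hypergraph $H_U$ is isomorphic to the unit-contraction $H/\mathcal{R}_u(H)$. Since an isomorphism of hypergraphs is realized by a bijection of vertex sets together with a compatible bijection of hyperedge sets, the matrices $B_{H_U}$ and $B_{H/\mathcal{R}_u(H)}$ differ only by a permutation of rows and columns. Such permutations are invertible linear maps, so the two matrices have equal rank and hence equal nullity; write $d = \dim \ker B_{H/\mathcal{R}_u(H)} = \dim \ker B_{H_U}$.

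Next I would fix a basis $\{y_1, y_2, \ldots, y_d\}$ of the null space of $B_{H_U}$, where each $y_i : U \to \mathbb{C}$. By \Cref{thm-induced}, the extension $y_i'$ of each $y_i$ to $V(H)$ lies in the null space of $B_H$. Thus the $d$ vectors $y_1', y_2', \ldots, y_d'$ all belong to $\ker B_H$, and it remains only to verify that they are linearly independent.

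For the linear independence, I would use that the extension operation $y \mapsto y'$ is linear and injective: suppose $\sum_{i=1}^d c_i y_i' = 0$ in $\mathbb{C}^{V(H)}$. Restricting this identity to the coordinates indexed by $U$ and using $y_i'|_U = y_i$ gives $\sum_{i=1}^d c_i y_i = 0$ in $\mathbb{C}^{U}$. Since $\{y_1, \ldots, y_d\}$ is a basis, every $c_i = 0$, so the extensions are linearly independent. Consequently $\dim \ker B_H \ge d = \dim \ker B_{H/\mathcal{R}_u(H)}$, which is the claim.

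I do not expect a serious obstacle here, as both ingredients are already in place; the only point requiring care is the first step, namely confirming that the hypergraph isomorphism of \Cref{contraction-iso} really induces a row-and-column permutation equivalence of the incidence matrices so that nullity is preserved. This hinges on the isomorphism carrying hyperedges to hyperedges bijectively, which is exactly the content of the condition $e \in E(H_U)$ if and only if $\{f(v) : v \in e\} \in E(H/\mathcal{R}_u(H))$ established in the proof of \Cref{contraction-iso}; once that correspondence is made explicit, the equality of nullities is immediate and the rest is routine linear algebra.
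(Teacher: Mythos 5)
Your proposal is correct and follows essentially the same route as the paper's own proof: choose $U$ with one vertex per unit, invoke \Cref{contraction-iso} to identify the nullities of $B_{H_U}$ and $B_{H/\mathcal{R}_u(H)}$, and extend a linearly independent family from $\ker B_{H_U}$ into $\ker B_H$ via \Cref{thm-induced}. Your explicit justification of the permutation equivalence of the incidence matrices and of the injectivity of the extension map (by restriction to $U$) merely fills in details the paper leaves implicit.
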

\begin{proof}
    Let $U\subseteq V(H)$ be such that $U$ contains exactly one vertex from each unit in the hypergraph $H$. By the \Cref{contraction-iso}, the sub-hypergraph of $H_U$ is isomorphic to $H/\mathcal{R}_u(H)$. Therefore, the dimensions of the null spaces of $B_{H_U}$ and $B_{H/\mathcal{R}_u(H)}$ are the same. By the \Cref{thm-induced}, for each vector $y$ in the null space of $B_{H_U}$, its extension $y'$ belongs to the null space of $B_H$. If $y_1,\ldots,y_k$ are linearly independent vectors in the null space of $B_{H_U}$, then their extensions $y_1',\ldots,y_k'$ are also linearly independent vectors in the null space of $B_{H}$. Thus, the result follows.
\end{proof}

Suppose that $u,v\in V(H)$ are such that $E_u(H)=E_v(H)$, then $U=\{u\}$, and $V=\{v\}$ forms an equal partition of hypergraph. Thus, $\chi_{\{u\}}-\chi_{\{v\}}$ belongs to the null space of $B_H$. We denote $\chi_{\{u\}}-\chi_{\{v\}}$ as $x_{uv}$. Therefore, we have the following result.
\begin{prop}\label{prop-uv}
     Let $H$ be a hypergraph with $u,v\in V(H)$. The stars $E_u(H)=E_v(H)$ if and only if $B_Hx_{uv}=0$.
\end{prop}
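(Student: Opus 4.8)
The plan is to derive this biconditional directly from \Cref{thm-equal-partition} by specializing it to the singleton sets $U=\{u\}$ and $V=\{v\}$. First I would dispose of the degenerate case $u=v$: here $x_{uv}=\chi_{\{u\}}-\chi_{\{u\}}=0$, so $B_Hx_{uv}=0$ holds vacuously, while $E_u(H)=E_v(H)$ is trivially true, and both sides of the equivalence hold. Hence I may assume $u\neq v$, which in particular makes $\{u\}$ and $\{v\}$ disjoint, as required by the definition of an equal partition of hyperedges.

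The key observation is that for a singleton $\{u\}$ and any hyperedge $e$, the cardinality $|e\cap\{u\}|$ equals $1$ when $u\in e$ and $0$ otherwise; equivalently, $|e\cap\{u\}|=1$ precisely when $e\in E_u(H)$. Consequently the equality $|e\cap\{u\}|=|e\cap\{v\}|$ for a fixed $e$ is the same statement as $e\in E_u(H)\iff e\in E_v(H)$. Ranging over all $e\in E(H)$, the condition that $\{u\}$ and $\{v\}$ form an equal partition of hyperedges is therefore identical to the condition $E_u(H)=E_v(H)$.

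With this translation in hand, both implications follow at once. If $E_u(H)=E_v(H)$, then the pair $\{u\}$, $\{v\}$ is an equal partition of hyperedges, and the forward direction of \Cref{thm-equal-partition} gives $B_H(\chi_{\{u\}}-\chi_{\{v\}})=B_Hx_{uv}=0$. Conversely, if $B_Hx_{uv}=0$, the converse direction of \Cref{thm-equal-partition} shows that $\{u\}$, $\{v\}$ is an equal partition of hyperedges, and by the translation above this is exactly $E_u(H)=E_v(H)$.

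There is essentially no hard step here; the result becomes immediate once the stars are recognized as counting incidences of singletons, and the only point requiring a little care is the bookkeeping around $u=v$ (to respect the disjointness hypothesis of \Cref{thm-equal-partition}), handled separately as above. For readers who prefer a self-contained argument, I note that one may bypass \Cref{thm-equal-partition} entirely by computing $(B_Hx_{uv})(e)=\sum_{w\in e}(\chi_{\{u\}}(w)-\chi_{\{v\}}(w))$, which equals $1$ if $e\in E_u(H)\setminus E_v(H)$, equals $-1$ if $e\in E_v(H)\setminus E_u(H)$, and equals $0$ otherwise; this vector vanishes for every $e$ if and only if the two stars coincide.
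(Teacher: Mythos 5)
Your proof is correct, and it is a legitimate alternative to the paper's. The paper's own proof is a one-line column argument: $B_Hx_{uv}$ is precisely the difference $s_u-s_v$ of the two columns of $B_H$ indexed by $u$ and $v$, and these columns are identical if and only if $E_u(H)=E_v(H)$. You instead derive the proposition formally from \Cref{thm-equal-partition} specialized to the singletons $U=\{u\}$, $V=\{v\}$ --- which is, in fact, exactly how the paper \emph{motivates} the proposition in the paragraph preceding it, though its formal proof does not route through that theorem. Your version buys a little more rigor on two points the paper glosses over: the degenerate case $u=v$ (where the statement is vacuously true but the disjointness hypothesis in \Cref{equal partition defn} would otherwise be violated), and the explicit translation $|e\cap\{u\}|=|e\cap\{v\}|\iff(e\in E_u(H)\iff e\in E_v(H))$, which is the real content of the equivalence. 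Your closing self-contained computation of $(B_Hx_{uv})(e)$ as $+1$, $-1$, or $0$ according to whether $e$ lies in $E_u(H)\setminus E_v(H)$, $E_v(H)\setminus E_u(H)$, or neither is just the paper's column argument written entrywise, so in substance the two proofs coincide; the only trade-off is that your main route depends on \Cref{thm-equal-partition}, whereas the paper's argument is freestanding.
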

\begin{proof}
 If $E_u(H)=E_v(H)$, the two columns of $B_H$ corresponding to the vertices $u$ and $v$ are identical. Therefore, $B_Hx_{uv}=0$.
Conversely, if $B_Hx_{uv}=0$, then two columns of $B_H$ corresponding to the two vertices $u$ and $v$ are identical. Therefore, $E_u(H)=E_v(H)$.
\end{proof}
Suppose that $H$ is a hypergraph, and $U\subseteq V(H)$. Consider the vector space $S_U=\{x:V(H)\to\mathbb{C}:x(v)=0 \text{~for all~} v\in V(H)\setminus U, \text{~and~}\sum\limits_{v\in U}x(v)=0\}$. If $U=\{u_0,u_1,\ldots,u_n\}$, then the collection  $\{x_{u_iu_0}:i=1,\ldots,n\}$ spans the vector space $S_U$. That is, for a unit $W_{E}=\{v_0,\ldots,v_n\}$, the vector space $S_{W_E}$ is spanned by the vectors $x_{v_1v_0},\ldots,x_{v_nv_0}$. For instance, consider the hypergraph $H$ described in the \Cref{ex-unit} (see the \Cref{fig:hyp-unit}). For the unit $W_{E_3}=\{5,6,7\}$, we have each of the two vectors $ x_{v_1v_0}$, and $x_{v_2v_0}$ belongs to the null space of $B_H$. Consequently, the two-dimensional vector space $S_{W_{E_3}}$ is a subspace of the null space of $B_H$. There does not exist any $W$ such that $W_{E_3}\subsetneq W\subseteq V(H) $ with $S_W$ is a subspace of the null space of $B_H $. That is, $W_{E_3}$ is a maximal collection of vertices with the property $S_{W_{E_3}}$ is a subspace of the null space of $B_H$. Thus, we have the following result.
\begin{thm}
    \label{null-ag}
     Let $H$ be a hypergraph, $W\subseteq V(H)$, and $|W|\ge 2$. The set  $W$ is a unit in $H$ if and only if $W$ is a maximal set such that $S_W$ is a subspace of the null space of $B_H$.
\end{thm}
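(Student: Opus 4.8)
The plan is to reduce the entire statement to pairwise comparisons of stars via \Cref{prop-uv}, which identifies the condition $B_H x_{uv}=0$ with $E_u(H)=E_v(H)$. The central observation I would establish first is a clean reformulation: for any $W\subseteq V(H)$ with $|W|\ge 2$, the inclusion $S_W\subseteq\ker B_H$ holds if and only if all vertices of $W$ share a common star. Indeed, writing $W=\{u_0,u_1,\ldots,u_n\}$, the vectors $\{x_{u_iu_0}:i=1,\ldots,n\}$ span $S_W$ (as noted just before the statement), so $S_W\subseteq\ker B_H$ holds exactly when each $x_{u_iu_0}$ lies in the null space; by \Cref{prop-uv} this is equivalent to $E_{u_i}(H)=E_{u_0}(H)$ for all $i$, that is, to all of $W$ having the same star. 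With this reformulation in hand, both implications become statements about maximal sets of vertices with a common star, which is precisely the defining property of a unit.

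For the forward direction I would assume that $W$ is a unit. Then all its vertices have the same star, so the reformulation immediately yields $S_W\subseteq\ker B_H$. For maximality, suppose some $W'$ with $W\subsetneq W'\subseteq V(H)$ also satisfied $S_{W'}\subseteq\ker B_H$. Picking $u\in W'\setminus W$ and any $v\in W$, the difference $x_{uv}$ is supported on $\{u,v\}\subseteq W'$ and sums to zero, hence $x_{uv}\in S_{W'}\subseteq\ker B_H$; by \Cref{prop-uv} this gives $E_u(H)=E_v(H)$, which equals the common star of the unit $W$. Thus $u$ shares the star of $W$, and since a unit is an entire equivalence class under $\mathcal{R}_u(H)$, this forces $u\in W$, contradicting $u\notin W$. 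Hence no such $W'$ exists and $W$ is maximal.

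For the converse I would assume that $W$ is maximal with $S_W\subseteq\ker B_H$. The reformulation shows that every pair of vertices of $W$ has the same star, so $W$ is contained in a single equivalence class $\widetilde W$ of $\mathcal{R}_u(H)$, that is, in a unit $\widetilde W\supseteq W$. Being a unit, $\widetilde W$ satisfies $S_{\widetilde W}\subseteq\ker B_H$ by the forward direction, and $|\widetilde W|\ge|W|\ge 2$. The maximality of $W$ then prohibits $W\subsetneq\widetilde W$, so $W=\widetilde W$ and $W$ is itself a unit.

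The routine parts, namely the spanning of $S_W$ and the pairwise translation through \Cref{prop-uv}, are immediate. The main point requiring care is aligning the two notions of maximality: the combinatorial maximality that defines a unit, and the linear-algebraic maximality of $W$ with $S_W\subseteq\ker B_H$. The reformulation lemma is exactly what bridges them, and the only subtlety is the hypothesis $|W|\ge 2$, which guarantees that the spanning set of $S_W$ is nonempty and that the candidate larger set $\widetilde W$ still satisfies $|\widetilde W|\ge 2$, so that the property $S_{\bullet}\subseteq\ker B_H$ is meaningfully comparable across both sets.
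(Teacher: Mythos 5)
Your proposal is correct and takes essentially the same route as the paper's proof: both directions reduce, via \Cref{prop-uv} and the spanning set $\{x_{u_iu_0}\}$ of $S_W$, to the statement that $S_W\subseteq Ker(B_H)$ exactly when all vertices of $W$ share a star, with maximality handled by contradiction. If anything, your converse is slightly more complete than the paper's terse one-line version, since your detour through the containing unit $\widetilde W$ and the forward direction makes explicit why linear-algebraic maximality of $W$ forces $W$ to exhaust its equivalence class.
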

\begin{proof}
    Suppose the set $W$ is a unit in $H$ and $W=W_E$. If $W=W_E=\{v_0,v_1,\ldots,v_n\}$, then by the 
\Cref{prop-uv}, $B_Hx_{v_iv_0}=0$ for all $i=1,\ldots,n$. Therefore, $S_{W_E}$ is a subspace of the null space of $B_H$. If possible, let $W$ not be a maximal subset with the property of $S_W$ as a subspace of the null space of $B_H$. Thus, there exists a $W'$ such that $W\subsetneq W'\subseteq V(H)$, with $S_{W'}$ being a subspace of the null space of $B_H$. Since $W$ is a proper subset of $W'$, there exists $u\in W'\setminus W$. Now, for any $v\in W$, the vector $x_{uv}\in S_W$. Therefore, $B_Hx_{uv}=0$. Therefore, by the \Cref{prop-uv}, $E_u(H)=E_v(H)$. This is a contradiction to the fact that $W$ is a unit in $H$ because a unit is a maximal collection of vertices with the same stars. Therefore, our assumption is wrong, and $W$ is a maximal set such that $S_W$ is a subspace of the null space of $B_H$.

Conversely, suppose that $W$ is a maximal set such that $S_W$ is a subspace of the null space of $B_H$. Therefore, by the \cref{prop-uv}, $W$ is a maximal collection of vertices with the same star. Therefore, $W$ is a unit.
\end{proof}
Since for any unit $W_E$ in $H$ with $|W_E|>2$, the dimension of $S_{W_E}$ is $|W_E|-1$, we have the following Corollary of the \Cref{null-ag}.
\begin{cor}
    Let $H$ be a hypergraph. The dimension of the null space of $B_H$ is at least $(|V(H)|-|\mathfrak{U}(H)|)$.
\end{cor}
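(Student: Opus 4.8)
The plan is to exhibit $|V(H)|-|\mathfrak{U}(H)|$ explicit linearly independent vectors inside the null space of $B_H$, built from the subspaces $S_W$ attached to the units $W$. The engine is \Cref{null-ag}, which guarantees that $S_W$ lies in the null space of $B_H$ for every unit $W$. First I would record the two counting facts that produce the target number. The units are the equivalence classes of $\mathcal{R}_u(H)$, so they partition $V(H)$; hence $\sum_{W\in\mathfrak{U}(H)}|W|=|V(H)|$, and the number of classes is $|\mathfrak{U}(H)|$. Subtracting $1$ from each term gives
\[
\sum_{W\in\mathfrak{U}(H)}(|W|-1)=|V(H)|-|\mathfrak{U}(H)|,
\]
which is exactly the dimension we must achieve.

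Next I would attach an explicit spanning set to each unit. Writing $W=\{v_0,v_1,\ldots,v_m\}$, the discussion preceding \Cref{null-ag} shows that $x_{v_1v_0},\ldots,x_{v_mv_0}$ span $S_W$, giving $|W|-1$ vectors (for a singleton unit this count is $0$ and $S_W$ is trivial, so such units may be harmlessly included, and the sum above is unaffected). Each vector $x_{v_iv_0}=\chi_{\{v_i\}}-\chi_{\{v_0\}}$ is supported entirely within $W$, and by \Cref{null-ag} (equivalently, by \Cref{prop-uv}) it belongs to the null space of $B_H$. Taking the union of these spanning sets over all units yields a family of $|V(H)|-|\mathfrak{U}(H)|$ kernel vectors.

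The key step is to check that this combined family is linearly independent, and here I would exploit that distinct units are disjoint subsets of $V(H)$. Because every vector arising from a unit $W$ is supported inside $W$, a vanishing linear combination can be analysed block by block: restricting to the coordinates indexed by a fixed unit $W$ annihilates every term coming from the other units and leaves only the combination of the $x_{v_iv_0}$ for $W$, which are independent as differences of distinct standard basis vectors. Equivalently, the sum $\sum_{W\in\mathfrak{U}(H)}S_W$ is in fact a direct sum, so its dimension equals $\sum_{W}(|W|-1)=|V(H)|-|\mathfrak{U}(H)|$, and it is contained in the null space of $B_H$. The only genuine obstacle is this independence-across-units bookkeeping; once it is handled by the disjoint-support observation, the bound $\dim\ker B_H\ge |V(H)|-|\mathfrak{U}(H)|$ is immediate, since all the work of placing each $S_W$ inside the kernel has already been supplied by \Cref{null-ag}.
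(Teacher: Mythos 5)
Your proof is correct and follows essentially the same route as the paper: both invoke \Cref{null-ag} to place each subspace $S_{W_E}$ inside the null space of $B_H$ and then sum the dimensions $|W_E|-1$ over the units, which partition $V(H)$, giving $|V(H)|-|\mathfrak{U}(H)|$. The only difference is that you explicitly justify, via the disjoint-support argument, that the subspaces attached to distinct units form a direct sum --- a bookkeeping step the paper's proof leaves implicit --- so your write-up is, if anything, slightly more complete.
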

\begin{proof}
    Since by the \Cref{null-ag}, for each unit $W_{E}$ in a hypergraph $H$ with $|W_E|>1$, we have $S_{W_E}$ is a subspace of the null space of $B_H$, and the dimension of $S_{W_E}$ is  $|W_E|-1$. Therefore, the dimension of the null space of $B_H$ is at least $ \sum\limits_{W_E\in \mathfrak{U}(H)}(|W_E|-1)=(|V(H)|-|\mathfrak{U}(H)|)$. 
\end{proof}
The above Corollary immediately indicates the following result.
\begin{cor}
    For a hypergraph $H$, the rank of $B_H$ is at most $|\mathfrak{U}(H)|$.
\end{cor}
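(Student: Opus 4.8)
The plan is to deduce this directly from the preceding Corollary via the rank--nullity theorem, since the two statements are complementary measurements of the same matrix. The edge-vertex incidence matrix $B_H$ has its columns indexed by the vertex set $V(H)$, so as a linear map it acts on a space of dimension $|V(H)|$. First I would invoke the rank--nullity theorem, which gives
\[
\operatorname{rank}(B_H)+\dim\big(\ker B_H\big)=|V(H)|,
\]
where $\ker B_H$ denotes the null space of $B_H$. The only external input needed is the immediately preceding Corollary, which asserts that $\dim(\ker B_H)\ge |V(H)|-|\mathfrak{U}(H)|$.

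Substituting this lower bound on the nullity into the rank--nullity identity yields
\[
\operatorname{rank}(B_H)=|V(H)|-\dim\big(\ker B_H\big)\le |V(H)|-\big(|V(H)|-|\mathfrak{U}(H)|\big)=|\mathfrak{U}(H)|,
\]
which is exactly the claimed bound. There is essentially no obstacle here: the entire content has already been established in the previous Corollary, and this statement is merely its rank-side reformulation. The only point to keep in mind is that rank--nullity should be applied to $B_H$ viewed as a map on $\mathbb{C}^{V(H)}$ (so that the relevant dimension is $|V(H)|$, the number of columns), rather than accidentally counting rows; since the null space in question is precisely the space of vectors $x:V(H)\to\mathbb{C}$ with $B_Hx=0$, this is the correct convention and the conclusion follows at once.
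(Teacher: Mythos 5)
Your proof is correct and matches the paper's intended argument exactly: the paper states this corollary as an immediate consequence of the preceding one (that $\dim\ker B_H \ge |V(H)|-|\mathfrak{U}(H)|$), with rank--nullity applied to $B_H$ acting on $\mathbb{C}^{V(H)}$ supplying the conversion, just as you do. Your added care about using the column count $|V(H)|$ rather than the row count is a sensible clarification but not a departure from the paper's route.
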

For example, consider the hypergraph illustrated in the \Cref{fig:hyp-unit}. Since the hypergraph has $6$ units (see the  \Cref{fig:unit-contraction}), the rank of $B_H$ is at most $6$. The \Cref{null-ag} suggests that units are one of the structures in hypergraphs that induce vectors in the null space of $B_H$. %Thus, the rank of $B_H$ is, at most, the number of units in the hypergraph.
Since we have already shown that besides units, other hypergraph structures are also responsible for the vectors in the null space of $B_H$, the rank of $B_H$ may be less than the number of units. For instance, for the above hypergraph $H$ (illustrated in \Cref{fig:hyp-unit}), the vector $y=-\frac{2}{3}\chi_{\{1,10\}}+\frac{2}{3}\chi_{\{3\}}+\frac{1}{3}\chi_{\{5\}}-\frac{1}{3}\chi_{\{8\}}+\chi_{\{11\}}$ belongs to the null space of $B_H$. This vector is not related to the unit but can be explained using the \Cref{gen-ker}.

The \Cref{null-ag} shows that each unit $W_E$ of cardinality at least $2$ leads to a vector in the null space of $B_H$. In the proof of \Cref{contraction-null}, we have shown that each vector in the null space of $B_{H/\mathcal{R}_u(H)}$ corresponds to a vector in the null space of $B_H$. These two facts motivate the following result. Given any matrix $M$, we denote the \emph{null space of $M$} as $Ker(M)$.
\begin{thm}\label{rank-null-bh}
    For a hypergraph $H$, the dimension of the null space of $B_H=$ the dimension of the null space of $B_{H/\mathcal{R}_u(H)}$ $+|V(H)|-|\mathfrak{U}(H)|$.
\end{thm}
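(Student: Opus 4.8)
The plan is to realize $Ker(B_H)$ as an internal direct sum of two explicit subspaces, one accounting for the term $|V(H)|-|\mathfrak{U}(H)|$ and the other for $\dim Ker(B_{H/\mathcal{R}_u(H)})$. Fix a set $U\subseteq V(H)$ containing exactly one vertex from each unit, so that by \Cref{contraction-iso} the induced sub-hypergraph $H_U$ is isomorphic to $H/\mathcal{R}_u(H)$; in particular $\dim Ker(B_{H_U})=\dim Ker(B_{H/\mathcal{R}_u(H)})$. Define
\[
A=\bigoplus_{W_E\in\mathfrak{U}(H)}S_{W_E},\qquad B=\{\,y':y\in Ker(B_{H_U})\,\},
\]
where $y'$ is the extension of $y$ to $V(H)$. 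By \Cref{null-ag} each $S_{W_E}$ lies in $Ker(B_H)$, and since the units partition $V(H)$ into disjoint supports, $\dim A=\sum_{W_E}(|W_E|-1)=|V(H)|-|\mathfrak{U}(H)|$. By \Cref{thm-induced} every extension $y'$ with $y\in Ker(B_{H_U})$ lies in $Ker(B_H)$, so $B\subseteq Ker(B_H)$ and $\dim B=\dim Ker(B_{H/\mathcal{R}_u(H)})$.

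First I would verify $A\cap B=\{0\}$. A vector in $B$ is supported on $U$, while a vector $x\in A$ decomposes as $\sum_{W_E}x_{W_E}$ with each $x_{W_E}$ supported on $W_E$ and summing to zero. If such an $x$ is also supported on $U$, then for each unit $W_E$ the component $x_{W_E}$ vanishes at every non-representative vertex of $W_E$, whence the zero-sum condition forces it to vanish at the representative too; thus $x=0$. Consequently the sum $A+B$ is direct and $\dim(A\oplus B)=(|V(H)|-|\mathfrak{U}(H)|)+\dim Ker(B_{H/\mathcal{R}_u(H)})$, which is exactly the right-hand side of the claimed identity. It therefore remains only to prove $Ker(B_H)=A\oplus B$, and since the containment $A\oplus B\subseteq Ker(B_H)$ is already established, the whole theorem reduces to the reverse containment $Ker(B_H)\subseteq A+B$.

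The reverse containment is the main obstacle, and the key tool is the defining \emph{all-or-nothing} property of units: for any hyperedge $e$ and any unit $W_E$ with generator $E$, either $W_E\subseteq e$ (when $e\in E$) or $W_E\cap e=\emptyset$ (when $e\notin E$), since membership of a vertex in $e$ depends only on its star. Given $z\in Ker(B_H)$, set $s_{W_E}=\sum_{v\in W_E}z(v)$ for each unit. Partitioning each hyperedge into the units it contains yields $(B_Hz)(e)=\sum_{W_E\subseteq e}s_{W_E}=0$ for all $e\in E(H)$. Under the identification $H_U\cong H/\mathcal{R}_u(H)$, where $W_E\in\tilde e$ if and only if $W_E\subseteq e$, this says precisely that the vector $y$ on $U$ given at each representative $v_0$ by $y(v_0)=s_{W_{E_{v_0}(H)}}$ lies in $Ker(B_{H_U})$. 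Put $b=y'\in B$ and $a=z-b\in Ker(B_H)$. Because $b$ is supported on $U$ and meets each unit $W_E$ only at its representative with value $s_{W_E}$, the sum of $a$ over each unit is $s_{W_E}-s_{W_E}=0$; as $a$ is supported on $V(H)=\bigsqcup_{W_E}W_E$, its restriction to each unit lies in $S_{W_E}$, so $a\in A$. Hence $z=a+b\in A+B$.

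Finally I would combine the two containments to conclude $Ker(B_H)=A\oplus B$ and read off the dimension identity, which is the assertion of the theorem. The only delicate point is the construction in the previous paragraph, where the all-or-nothing property converts the scalar action of $B_H$ on $z$ into the action of the contracted incidence matrix on the unit-sums $s_{W_E}$; everything else is bookkeeping with disjoint supports and a dimension count. As a byproduct the argument also yields $\mathrm{rank}(B_H)=\mathrm{rank}(B_{H/\mathcal{R}_u(H)})$ via rank--nullity, recovering the rank invariance announced earlier.
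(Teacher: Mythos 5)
Your proof is correct and is essentially the paper's own argument in a different dress: your subspace $A=\bigoplus_{W_E\in\mathfrak{U}(H)}S_{W_E}$ is exactly the kernel of the unit-summing map $f(x)=\hat{x}$, $\hat{x}(W_E)=\sum_{v\in W_E}x(v)$, to which the paper applies the rank--nullity theorem, and your subspace $B$ is precisely the image of the section (extensions supported on one representative per unit, via \Cref{contraction-iso} and \Cref{thm-induced}) that the paper constructs to prove $f$ is surjective. Making the splitting $Ker(B_H)=A\oplus B$ explicit, with the all-or-nothing property of units spelled out rather than used silently in the computation $(B_{H/\mathcal{R}_u(H)}\hat{x})(\tilde{e})=(B_Hx)(e)$, is a sound but cosmetic reorganization of the same proof, so nothing is missing.
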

\begin{proof}
For any vector $x\in Ker(B_H)$, we set $\hat x:\mathfrak{U}(H)\to\mathbb{C}$ as $ \hat{x}(W_E)=\sum\limits_{v\in W_E}x(v)$ for any $W_E\in \mathfrak{U}(H)$. For any $\tilde{e}\in E(H/\mathfrak{R}_u(H))$, there exists $e\in E(H)$ such that $\tilde{e}=\{W_E:W_E\subseteq e\}$. Therefore, 
\begin{align*}
    (B_{H/\mathcal{R}_u(H)}\hat{x})(\tilde{e})&=\sum\limits_{W_E\in\mathfrak{U}(H)}b_{\tilde{e}W_E}\hat{x}(W_E)\\
    &=\sum\limits_{W_E\in\mathfrak{U}(H)}b_{\tilde{e}W_E}\sum\limits_{v\in W_E}x(v)\\
     &=\sum\limits_{v\in V(H)}b_{ev}x(v)=(B_Hx)(e).
\end{align*}
Since $x\in Ker(B_H)$, it follows that $ (B_{H/\mathcal{R}_u(H)}\hat{x})(\tilde{e})=(B_Hx)(e)=0$ for all $\tilde{e}\in E(H/\mathfrak{R}_u(H))$. Thus, $B_{H/\mathcal{R}_u(H)}\hat{x}=0$, leading to a mapping $f:Ker(B_H)\to Ker(B_{H/\mathcal{R}_u(H)})$ defined by $f(x)=\hat{x}$ for all $x\in Ker(B_H)$.

Now, we show that $f$ is surjective. Let $y\in Ker(B_{H/\mathcal{R}_u(H)})$. Consider $U\subseteq V(H)$ such that $U$ contains exactly one element from each unit. Note that for each $u\in U$, we have $W_{E_u(H)}$ is the unit containing $u$. Define $y':V(H)\to\mathbb{C}$ as $y'(u)=y(W_{E_u(H)})$ if $u\in U$ and otherwise, $y'(u)=0 $ if $u\in V(H)\setminus U$. Since \(y'(u) \neq 0\) implies \(u \in U\), each unit can contain at most one element where \(y'\) is non-zero. Therefore, $f(y') =\hat{y'}=y$. Consequently, $f$ is surjective.
Since $f$ is a surjective linear map, by the rank-nullity theorem \cite[Chap.3, Sec.3.1]{LA}, dimension of $Ker(B_H)=$ dimension of the null space of $f+$ dimension of $Ker(B_{H/\mathcal{R}_u(H)}) $. Since $x\in \bigoplus\limits_{W_E\in\mathfrak{U}(H), |W_E| > 1}S_{W_E}$ if and only if $\hat{x}(W_E)=\sum\limits_{v\in W_E}x(v)=0$ for any $W_E\in \mathfrak{U}(H)$, the subspace $\bigoplus\limits_{W_E\in\mathfrak{U}(H), |W_E| > 1}S_{W_E} $ is the null space of $f$. Since the dimension of $\bigoplus\limits_{W_E\in\mathfrak{U}(H), |W_E| > 1}S_{W_E} $ is $\sum\limits_{W_E\in\mathfrak{U}(H)}(|W_E|-1))=|V(H)|-|\mathfrak{U}(H)|$, the result follows.
\end{proof}
If we consider the hypergraph $H$, illustrated in the  \Cref{fig:hyp-unit}, the sum $\sum\limits_{W_E\in\mathfrak{U}(H)}(|W_E|-1))=|V(H)|-6=5$. The dimension of the null space of $B_{H/\mathcal{R}_u(H)}$ is $ 1$. Therefore, as the \Cref{rank-null-bh} suggests, the dimension of the null space of $B_H$ is $1+5=6$. It is intriguing to note that the rank of $B_H$ is $5$, which is the same as the rank of $B_{H/\mathcal{R}_u(H)}$. Generally, this fact can be proved for any hypergraph as a direct consequence of the \Cref{rank-null-bh}.
\begin{cor}\label{quotient-rank}
    For any hypergraph $H$, the rank of $B_H$ and the rank of $B_{H/\mathcal{R}_u(H)}$ are the same.
\end{cor}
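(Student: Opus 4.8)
The plan is to derive this immediately from \Cref{rank-null-bh} by invoking the rank–nullity theorem on each of the two incidence matrices separately. The key numerical observation is that $B_H$ has exactly $|V(H)|$ columns (one per vertex), while $B_{H/\mathcal{R}_u(H)}$ has exactly $|\mathfrak{U}(H)|$ columns, since by definition the vertex set of the unit-contraction is $V(H/\mathcal{R}_u(H))=\mathfrak{U}(H)$. Once these column counts are in hand, the statement is a one-line algebraic cancellation.

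First I would apply the rank–nullity theorem to $B_H$, which gives $\operatorname{rank}(B_H)=|V(H)|-\dim Ker(B_H)$, and then to $B_{H/\mathcal{R}_u(H)}$, which gives $\operatorname{rank}(B_{H/\mathcal{R}_u(H)})=|\mathfrak{U}(H)|-\dim Ker(B_{H/\mathcal{R}_u(H)})$. Next I would substitute the identity supplied by \Cref{rank-null-bh}, namely $\dim Ker(B_H)=\dim Ker(B_{H/\mathcal{R}_u(H)})+|V(H)|-|\mathfrak{U}(H)|$, into the first expression. The computation then runs
\begin{align*}
\operatorname{rank}(B_H)&=|V(H)|-\dim Ker(B_H)\\
&=|V(H)|-\bigl(\dim Ker(B_{H/\mathcal{R}_u(H)})+|V(H)|-|\mathfrak{U}(H)|\bigr)\\
&=|\mathfrak{U}(H)|-\dim Ker(B_{H/\mathcal{R}_u(H)})\\
&=\operatorname{rank}(B_{H/\mathcal{R}_u(H)}),
\end{align*}
so the two terms $|V(H)|$ cancel and the extra $|\mathfrak{U}(H)|-\dim Ker(B_{H/\mathcal{R}_u(H)})$ is precisely the rank of the contracted incidence matrix.

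There is essentially no genuine obstacle here: the whole content has already been loaded into \Cref{rank-null-bh}, and this corollary merely repackages that nullity equation as a rank equation. The only point meriting a sentence of care is making explicit that the column count of $B_{H/\mathcal{R}_u(H)}$ equals $|\mathfrak{U}(H)|$ rather than $|V(H)|$, since it is exactly the discrepancy $|V(H)|-|\mathfrak{U}(H)|$ between the two column counts that is annihilated by the corresponding term in the nullity formula. Thus I would simply state the two rank–nullity identities, cite \Cref{rank-null-bh}, and present the cancellation above.
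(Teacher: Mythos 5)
Your proposal is correct and follows essentially the same route as the paper: apply the rank--nullity theorem to both incidence matrices (noting that $B_H$ has $|V(H)|$ columns and $B_{H/\mathcal{R}_u(H)}$ has $|\mathfrak{U}(H)|$ columns), substitute the nullity identity of \Cref{rank-null-bh}, and cancel. The paper's own proof is exactly this cancellation, so there is nothing to add.
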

\begin{proof}
    By the rank-nullity theorem, the rank of $B_H$=$|V(H)|-$ dimension of the null space of $B_H$. Therefore, by the \Cref{rank-null-bh}, the rank of $B_H=|\mathfrak{U}(H)|-$ the dimension of the null space of $B_{H/\mathcal{R}_u(H)}=$ the rank of $B_{H/\mathcal{R}_u(H)}$.
\end{proof}
We have earlier established that for any hypergraph \(H\), the vertex-edge incidence matrix \(I_H\) is the transpose of \(B_H\), that is, \(I_H = B_H^T\). In our prior findings, we explored various relationships between the structure of the hypergraph and the null space of the matrix \(B_H\). In this section, we will examine similar properties related to the null space of \(I_H\). As noted in \Cref{thm-equal-partition}, any equal partition of hyperedges corresponds to a vector in the null space of \(B_H\). The following definition introduces an equal partition of vertices, which will yield a vector in the null space of \(I_H\).
\begin{df}[Equal partition of vertices]
Let $H$ be a hypergraph. A pair $E,F\subset E(H)$ with $E\cap F=\emptyset$ is called an \emph{equal partition} of vertices if $|E_v(H)\cap E|=|E_v(H)\cap F|$ for all the vertices $v\in V(H)$.
\end{df} For instance, for any natural number $n$, consider the cycle graph $C_{2n}$. Suppose that $\alpha:E(H)\to \{-1,1\}$ is such that $\alpha(e_i)=(-1)^i$ for all $i=1,\ldots,n$. The pair of sets $\alpha^{-1}(-1)$, and $\alpha^{-1}(1)$ form an equal partition of vertices.
\begin{thm} \label{thm-equal partition of vertices}Let $H$ be a hypergraph.
    Two disjoint collections of hyperedges $E$, and $F$ are an equal partition of vertices if and only if the vector $\alpha=\chi_{E}-\chi_{F}$ belongs to the null space of $I_H$. 
\end{thm}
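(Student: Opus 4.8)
The plan is to mirror the proof of \Cref{thm-equal-partition} exactly, exploiting the fact that $I_H=B_H^T$ so that applying $I_H$ to a function on hyperedges produces a function on vertices, just as $B_H$ applied to a function on vertices produces a function on hyperedges. The only bookkeeping to get right is which index set each object lives on: here $\alpha=\chi_E-\chi_F$ is a vector in $\mathbb{C}^{E(H)}$, and $I_H\alpha$ is a vector indexed by $V(H)$, with $v$-th entry $(I_H\alpha)(v)=\sum_{e\in E(H)}(I_H)_{ve}\,\alpha(e)$.

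The key step is a single direct computation. Since $(I_H)_{ve}=(B_H)_{ev}=1$ precisely when $v\in e$, and since $\alpha(e)=1$ for $e\in E$, $\alpha(e)=-1$ for $e\in F$, and $\alpha(e)=0$ otherwise, the sum collapses to a sum over the star $E_v(H)=\{e\in E(H):v\in e\}$:
\begin{align*}
(I_H\alpha)(v)=\sum_{e\ni v}\alpha(e)=\bigl|E_v(H)\cap E\bigr|-\bigl|E_v(H)\cap F\bigr|.
\end{align*}
This identity is the vertex-edge analogue of the formula $(B_H(\chi_U-\chi_V))(e)=|e\cap U|-|e\cap V|$ used in \Cref{thm-equal-partition}.

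With this identity in hand, both implications are immediate. For the forward direction, if $E,F$ form an equal partition of vertices then $|E_v(H)\cap E|=|E_v(H)\cap F|$ for every $v\in V(H)$, so $(I_H\alpha)(v)=0$ for all $v$, giving $I_H\alpha=0$. For the converse, if $I_H\alpha=0$ then every entry vanishes, so $|E_v(H)\cap E|=|E_v(H)\cap F|$ for all $v$, which is exactly the defining condition of an equal partition of vertices.

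I do not expect any genuine obstacle: the result is the transpose dual of \Cref{thm-equal-partition}, and once the action of $I_H$ on $\alpha$ is written out, the equivalence is a tautology. The only point demanding care is verifying that the matrix-vector product over all hyperedges really restricts to the star of each vertex, i.e. that $(I_H)_{ve}=1\iff v\in e$, which follows directly from $I_H=B_H^T$ and the definition of $B_H$.
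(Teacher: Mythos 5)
Your proof is correct and follows essentially the same route as the paper: both establish the identity $(I_H\alpha)(v)=|E_v(H)\cap E|-|E_v(H)\cap F|$ and read off the equivalence entrywise. The extra detail you supply about $(I_H)_{ve}=(B_H)_{ev}$ restricting the sum to the star $E_v(H)$ is implicit in the paper's computation, so there is no substantive difference.
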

\begin{proof}
   Suppose that \( E \) and \( F \) form an equal partition of the vertices. Consequently, \( |E_v(H) \cap E| = |E_v(H) \cap F| \) for every vertex \( v \in V(H) \). Thus, \( (I_H\alpha)(v) = |E_v(H) \cap E| - |E_v(H) \cap F| = 0 \) for all \( v \in V(H) \), and $\alpha$ belongs to the null space of $I_H$.

   Conversely, if $\alpha=\chi_{E}-\chi_{F}$ belongs to the null space of $I_H$, then \(  |E_v(H) \cap E| - |E_v(H) \cap F| =(I_H\alpha)(v) = 0 \). 
   \end{proof}
\begin{exm}
  Let $H$ be a hypergraph with $V(H)=\{1,2,3,4,5\}$, and $E(H)=\{e_1=\{1,2,3\},e_2=\{1,3,4\},e_3=\{1,4,5\},e_4=\{1,5,2\}\}$. The pair of disjoint collections of hyperedges $E=\{e_1,e_3\}$, and $E_2=\{e_2,e_4\}$ forms an equal partition of vertices in $H$. The vector $\alpha=\chi_E-\chi_F$ belongs to the null space of $I_H$.

\end{exm}
Consider the graph $G$ with $V(G)=\{1,2,3,4\}$, and $E(G)=\{e_1=\{1,2\},e_2=\{3,4\},e_3=\{1,3\},e_4=\{1,4\},e_5=\{2,3\},e_6=\{2,4\}\}$. For the pair of disjoint subsets of hyperedges $E=\{e_1,e_2\}$, and $F=E(H)\setminus E$.  For any vertex $v\in V(G)$, it holds that $|E_v(G)\cap E|:|E_v(G)\cap F|=1:2$, and $2\chi_E-\chi_F$ is a vector in the null space of $I_H$. This instance motivates the possibility of a result similar to the \Cref{ratio-r} is also true for  $I_H$.
\begin{thm}\label{ratio-r-edge}
    Let $H$ be a hypergraph. For two subsets $E,F\subset E(H)$ with $E\cap F=\emptyset$, the ratio $|E_v(H)\cap E|:|E_v(H)\cap F|=r$ if and only if the vector $\alpha=\chi_E-r\chi_F$ belongs to the null space of $I_H$.
\end{thm}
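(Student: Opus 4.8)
The plan is to mirror the proof of \Cref{ratio-r}, replacing the edge-vertex incidence matrix $B_H$ by its transpose $I_H = B_H^T$ and interchanging the roles of vertices and hyperedges. Since $I_H$ has rows indexed by vertices and columns indexed by hyperedges, a vector $\alpha$ in the domain of $I_H$ is a function on $E(H)$, and its $v$-th coordinate after applying $I_H$ collects precisely the hyperedges incident to $v$, that is, the star $E_v(H)$. Concretely, for any $\alpha : E(H) \to \mathbb{C}$ one has $(I_H\alpha)(v) = \sum_{e \in E_v(H)} \alpha(e)$, which is the dual of the identity $(B_H x)(e) = \sum_{u \in e} x(u)$ used throughout the previous section.

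First I would substitute $\alpha = \chi_E - r\chi_F$ into this expression. Because $\chi_E(e)=1$ exactly when $e \in E$ and $\chi_F(e)=1$ exactly when $e \in F$, summing over $e \in E_v(H)$ yields $(I_H\alpha)(v) = |E_v(H)\cap E| - r\,|E_v(H)\cap F|$ for every vertex $v \in V(H)$. This single computation carries both directions of the equivalence, exactly as in the proofs of \Cref{ratio-r} and \Cref{thm-equal partition of vertices}.

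For the forward direction, assuming the ratio $|E_v(H)\cap E| : |E_v(H)\cap F| = r$ for all $v$ gives $|E_v(H)\cap E| - r\,|E_v(H)\cap F| = 0$, so the displayed quantity vanishes at every vertex and hence $\alpha \in Ker(I_H)$. Conversely, if $\alpha \in Ker(I_H)$, then $(I_H\alpha)(v) = 0$ for all $v$ forces $|E_v(H)\cap E| = r\,|E_v(H)\cap F|$ for every $v$, which is precisely the asserted ratio.

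There is no serious obstacle here; the only point requiring care is the bookkeeping of the transpose, namely that applying $I_H$ to a hyperedge-indexed vector restricts the sum to the star $E_v(H)$ rather than to a single hyperedge. Once the identity $(I_H\alpha)(v) = \sum_{e \in E_v(H)} \alpha(e)$ is established, the remainder of the argument is a verbatim transcription of the proof of \Cref{ratio-r}, and the statement follows.
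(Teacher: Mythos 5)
Your proposal is correct and follows essentially the same route as the paper: both compute $(I_H\alpha)(v)=\sum_{e\in E_v(H)}\alpha(e)=|E_v(H)\cap E|-r\,|E_v(H)\cap F|$ and read off both directions of the equivalence from this single identity. The paper does note afterwards that the result can alternatively be deduced from \Cref{ratio-r} via hypergraph duality, but its actual proof is the direct verification you give.
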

\begin{proof} If $|E_v(H)\cap E|:|E_v(H)\cap F|=r$, then
    for any $v\in V(H)$, it holds that $(I_H\alpha)(v)=\sum\limits_{e\in E_v(H)}\alpha(e)=|E_v(H)\cap E|-r|E_v(H)\cap F|=0$.

    Conversely, if the vector $\alpha=\chi_E-r\chi_F$ belongs to the null space of $I_H$, then $|E_v(H)\cap E|-r|E_v(H)\cap F|=\sum\limits_{e\in E_v(H)}\alpha(e)=(I_H\alpha)(v)=0$ for all $v\in V(H)$. Therefore, the ratio $|E_v(H)\cap E|:|E_v(H)\cap F|=r$.
\end{proof}
Though the \Cref{ratio-r-edge} is proven here independently, the theorem can be proven as a direct consequence of the \Cref{ratio-r}. Given a hypergraph $H$, the \emph{dual} of $H$ is the hypergraph $H^*$ such that $V(H^*)=E(H)$, and $E(H^*)=\{E_v(H):v\in V(H)\}$. For any hypergraph $H$, two subsets $E,F\subset E(H)$ with $E\cap F=\emptyset$, the ratio $|E_v(H)\cap E|:|E_v(H)\cap F|=r$ becomes two disjoint subsets $E$ and $F$ of the vertex set $V(H^*)$. Thus, the \Cref{ratio-r-edge} follows from the \Cref{ratio-r}. Similarly, results similar to the \Cref{uvw} and the \Cref{gen-ker} can be deduced for $I_H$.
\section{Incidence matrix and eigenvalues of other Hypergraph matrices}
Each column of the edge-vertex incidence matrix $B_H$ corresponds to a vertex of the hypergraph $H$. In this section, we show that this fact leads to some relation between the incidence matrix $B_H$ and some other matrices associated with the hypergraph $H$. One such matrix is the adjacency matrix $A_{(1,H)}=[a_{uv}]_{u,v\in V(H)}$ described in \cite{Bretto-hypergraph}, which is defined as $ a_{uv}=|E_u(H)\cap E_v(H)|$ for two distinct $u,v\in V(H)$, and the diagonal entries are $0$. Each unit $W_E$ in hypergraph $H$ with $|W_E|>1$ leads to an eigenvalue of $A_{(1,H)}$ (see \cite[Section-3]{unit}). Now, we show that this eigenvalue is related to the edge-vertex incidence matrix $B_H$. Before going into this result, recall that each column of $B_H$ is indexed by a vertex $v\in V(H)$; we denote the column as $s_v$. For two vectors $x:E(H)\to \mathbb{C}$ and $y:E(H)\to\mathbb{C}$, the usual inner product $\langle\cdot,\cdot\rangle$ is defined as $\langle x,y\rangle=\sum\limits_{e\in E(H)}x(v)\overline{y(v)}$. It is intriguing to note that for two vertices $u$ and $v$, the inner product $\langle s_u,s_v\rangle=\sum\limits_{e\in E(H)}b_{eu}b_{ev}=|E_u(H)\cap E_v(H)| $. This fact leads to the following theorem.
\begin{thm}\label{adj-rdr}
    Let $H$ be a hypergraph. For each unit $W_E$ in $H$ with $|W_E|>1$, the adjacency matrix $A_{(1,H)}$ has an eigenvalue $-\langle s_u,s_v\rangle$ of multiplicity $|W_E|-1$, where $u,v(\ne u)\in W_E$.
\end{thm}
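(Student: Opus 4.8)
The plan is to exhibit an explicit eigenspace of dimension $|W_E|-1$ for the eigenvalue $-\langle s_u,s_v\rangle$. First I would observe that this target eigenvalue is well defined, independent of the chosen pair: for any two distinct $u,v\in W_E$ we have $E_u(H)=E_v(H)=E$, so $\langle s_u,s_v\rangle=|E_u(H)\cap E_v(H)|=|E|$. Thus it suffices to produce a $(|W_E|-1)$-dimensional space of eigenvectors of $A_{(1,H)}$ with eigenvalue $-|E|$.

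The natural candidates are the vectors $x_{uv}=\chi_{\{u\}}-\chi_{\{v\}}$ with $u,v\in W_E$, which already appeared in connection with the null space of $B_H$. The core step is to evaluate $A_{(1,H)}x_{uv}$ coordinate by coordinate. For a vertex $w\in V(H)$ one has $(A_{(1,H)}x_{uv})(w)=a_{wu}-a_{wv}$, and I would split into cases according to the location of $w$. If $w\notin W_E$, then $a_{wu}=|E_w(H)\cap E|=a_{wv}$ because $E_u(H)=E_v(H)=E$, so this coordinate vanishes. If $w\in W_E\setminus\{u,v\}$, then $a_{wu}=|E|=a_{wv}$, so again the coordinate is $0$. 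If $w=u$, the zero-diagonal convention $a_{uu}=0$ together with $a_{uv}=|E|$ gives $-|E|$; symmetrically, if $w=v$ we get $+|E|$. Collecting these, $A_{(1,H)}x_{uv}=-|E|\,\chi_{\{u\}}+|E|\,\chi_{\{v\}}=-|E|\,x_{uv}$, so each $x_{uv}$ is an eigenvector with eigenvalue $-|E|=-\langle s_u,s_v\rangle$.

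To finish, I would fix an enumeration $W_E=\{v_0,v_1,\ldots,v_n\}$ with $n=|W_E|-1$ and consider the vectors $x_{v_1v_0},\ldots,x_{v_nv_0}$. As noted before \Cref{null-ag}, these span the space $S_{W_E}$ and are linearly independent, so the eigenspace of $A_{(1,H)}$ associated with $-|E|$ contains the $(|W_E|-1)$-dimensional subspace $S_{W_E}$. Hence $-\langle s_u,s_v\rangle$ is an eigenvalue of multiplicity at least $|W_E|-1$, as claimed.

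The main obstacle, though it is really bookkeeping rather than a conceptual difficulty, is the coordinate-wise computation: one must use $E_u(H)=E_v(H)=E$ to force the off-unit coordinates to cancel, and handle the zero diagonal of $A_{(1,H)}$ correctly so that exactly the $u$- and $v$-coordinates survive. Once that cancellation is verified, the eigenvalue identification and the linear-independence argument are immediate.
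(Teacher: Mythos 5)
Your proposal is correct and takes essentially the same approach as the paper: the same eigenvectors $x_{uv}=\chi_{\{u\}}-\chi_{\{v\}}$, the same coordinate-wise verification that $A_{(1,H)}x_{uv}=-\langle s_u,s_v\rangle x_{uv}$ using $E_u(H)=E_v(H)=E$, and the same conclusion from the linear independence of $x_{v_1v_0},\ldots,x_{v_nv_0}$. Your case split ($w\notin W_E$ versus $w\in W_E\setminus\{u,v\}$) merely spells out the paper's single observation that $a_{wu}=a_{wv}$ for all $w\in V(H)\setminus\{u,v\}$, and your explicit remark that the eigenvalue is independent of the chosen pair is a minor tidying of the same argument.
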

\begin{proof}
Since $|W_E|>1$, there exists $u,v(\ne u)\in W_E$. Consider the vector $x_{uv}=\chi_{\{u\}}-\chi_{\{v\}}$. Since $u,v\in W_E$, it holds that $a_{wu}=E\cap E_w(H)=a_{wv}$ for any $w\in V(H)\setminus\{u,v\}$. Consequently, $a_{uv}=|E_u(H)\cap E_v(H)|=a_{vu}$, leads us to $A_{(1,H)}x_{uv}=-|E_u(H)\cap E_v(H)|x_{uv} =-\langle s_u,s_v\rangle x_{uv}$. 

Again, if $W_E= \{v_0,v_1,\ldots,v_k\}$ then  $\langle s_{v_0},s_{v_i} \rangle=|E_{v_0}\cap E_{v_i}|=|E|$, and by the above argument $A_{(1,H)}x_{v_0v_i}=-\langle s_{v_0},s_{v_i} \rangle x_{v_0v_i}$ for all $i=1,\ldots,k$. Since $x_{v_0v_1},x_{v_0v_2},\ldots,x_{v_0v_k}$ are linearly independent, the multiplicity of the eigenvalue is $|W_E|-1$. 
\end{proof}
\begin{exm}\label{exm-adj-rdr}\rm
    Consider the hypergraph \(H\) shown in Figure \Cref{fig:hyp-unit}. The matrix representation \(A_{(1,H)}\) is given by:
\[
A_{(1,H)} = \left[
\begin{smallmatrix}
  0 & 2 & 1 & 1 & 1 & 1 & 1 & 0 & 0 & 1 & 1\\ 
  2 & 0 & 1 & 1 & 1 & 1 & 1 & 0 & 0 & 1 & 1\\ 
  1 & 1 & 0 & 2 & 0 & 0 & 0 & 0 & 0 & 1 & 0\\ 
  1 & 1 & 2 & 0 & 0 & 0 & 0 & 0 & 0 & 1 & 0\\ 
  1 & 1 & 0 & 0 & 0 & 2 & 2 & 1 & 1 & 1 & 1\\ 
  1 & 1 & 0 & 0 & 2 & 0 & 2 & 1 & 1 & 1 & 1\\ 
  1 & 1 & 0 & 0 & 2 & 2 & 0 & 1 & 1 & 1 & 1\\ 
  0 & 0 & 0 & 0 & 1 & 1 & 1 & 0 & 2 & 1 & 1\\ 
  0 & 0 & 0 & 0 & 1 & 1 & 1 & 2 & 0 & 1 & 1\\ 
  1 & 1 & 1 & 1 & 1 & 1 & 1 & 1 & 1 & 0 & 2\\ 
  1 & 1 & 0 & 0 & 1 & 1 & 1 & 1 & 1 & 2 & 0 
\end{smallmatrix}
\right].
\]

Each row \(i\) and column \(j\) of this matrix correspond to vertices \(i\) and \(j\) for \(i, j = 1, 2, \ldots, 11\). The edge-vertex incidence matrix \(B_H\) is:

\[
B_H = 
\begin{smallmatrix}
   \hline|\phantom{e_1}||&1&2&3&4&5&6&7&8&9&10&11|\\\hline\hline
   |e_1||&1&1&0&0&1&1&1&0&0&1&1\phantom{1}|\\
   |e_2||&1&1&1&1&0&0&0&0&0&0&0\phantom{1}|\\
    |e_3||&0&0&1&1&0&0&0&0&0&1&0\phantom{1}|\\
    |e_4||&0&0&0&0&1&1&1&1&1&0&0\phantom{1}|\\
     |e_5||&0&0&0&0&0&0&0&1&1&1&1\phantom{1}|\\\hline
\end{smallmatrix}.
\]

The units \(W_{E_1} = \{1, 2\}, W_{E_2} = \{3, 4\}, W_{E_3} = \{5, 6, 7\}, W_{E_4} = \{8, 9\}\) correspond to the eigenvalues \(-\langle s_1, s_2 \rangle = -2\), \(-\langle s_3, s_4 \rangle = -2\), \(-\langle s_5, s_6 \rangle = -2\), and \(-\langle s_8, s_9 \rangle = -2\), respectively. The unit \(W_{E_3}\) contributes a multiplicity of at least 2, and each of the other units contributes a multiplicity of at least 1. Thus, by \Cref{adj-rdr}, \(-2\) is an eigenvalue of \(A_{(1,H)}\) with multiplicity at least $5$.
\end{exm}
It is important to note that, to compute this eigenvalue, we have not relied on the entries or any other specific details of the matrix \(A_{(1,H)}\); instead, we have used only the columns of the matrix \(B_H\). This observation naturally raises the question of whether the matrix \(B_H\) itself contains all the necessary information to determine these eigenvalues of \(A_{(1,H)}\). In the next two results, we show that the matrix \(B_H\), along with the inner product, indeed encapsulates this information.

Another variation of hypergraph adjacency $A_{(2,H)}=[a_{uv}]_{u,v\in V(H)}$ is described in \cite{hg-mat}. Let $H$ be a hypergraph with $|e|>1$ for all $e\in E(H)$. For two distinct vertices $u,v\in V(H)$, the $(u,v)$-th entry of the matrix $A_{(2,H)}$ is $a_{uv}=\sum\limits_{e\in E_u(H)\cap E_v(H)}\frac{1}{|e|-1}$, and all the diagonal entries of the matrix are $ 0$. For this matrix also, we can conclude a result similar to the \Cref{adj-rdr}. We just need to change the inner product. For two vectors $x:E(H)\to \mathbb{C}$ and $y:E(H)\to\mathbb{C}$, we define an inner product $(x,y)=\sum\limits_{e\in E(H)}\frac{1}{|e|-1}x(e)y(e)$. This inner product is well-defined for all hypergraphs with $|e|>1$ for all $e\in E(H)$.
\begin{thm}\label{adj-ban}
    Let $H$ be a hypergraph with $|e|>1$ for all hyperedges $e\in E(H)$. For each unit $W_E$ in $H$ with $|W_E|>1$, the adjacency matrix $A_{(2,H)}$ has an eigenvalue $-( s_u,s_v)$ of multiplicity $|W_E|-1$, where $u,v(\ne u)\in W_E$.
\end{thm}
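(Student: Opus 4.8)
The plan is to transcribe the proof of \Cref{adj-rdr} essentially verbatim, replacing the usual inner product $\langle\cdot,\cdot\rangle$ by the weighted inner product $(\cdot,\cdot)$. The first step is to record the weighted analogue of the identity that drove the earlier argument: for two vertices $u,v$, since $b_{eu}b_{ev}=1$ precisely when $e\in E_u(H)\cap E_v(H)$ and is $0$ otherwise, we have $(s_u,s_v)=\sum_{e\in E(H)}\frac{1}{|e|-1}b_{eu}b_{ev}=\sum_{e\in E_u(H)\cap E_v(H)}\frac{1}{|e|-1}=a_{uv}$. Thus the weighted inner product of two columns of $B_H$ is exactly the corresponding off-diagonal entry of $A_{(2,H)}$, and the hypothesis $|e|>1$ for all $e$ is precisely what keeps $\frac{1}{|e|-1}$ finite so that this inner product is well-defined.

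Next I would fix a unit $W_E$ with $|W_E|>1$ and distinct $u,v\in W_E$, and consider the vector $x_{uv}=\chi_{\{u\}}-\chi_{\{v\}}$. Because $E_u(H)=E_v(H)=E$, for every $w\in V(H)\setminus\{u,v\}$ the sums defining $a_{wu}$ and $a_{wv}$ range over the identical index set $E_w(H)\cap E$, so $a_{wu}=a_{wv}$; hence the $w$-th entry of $A_{(2,H)}x_{uv}$ is $a_{wu}-a_{wv}=0$. Using that the diagonal of $A_{(2,H)}$ is zero and that the matrix is symmetric, the $u$-th entry evaluates to $a_{uu}-a_{uv}=-a_{uv}$ and the $v$-th entry to $a_{vu}-a_{vv}=a_{uv}$, so that $A_{(2,H)}x_{uv}=-a_{uv}\,x_{uv}=-(s_u,s_v)\,x_{uv}$. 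This exhibits $x_{uv}$ as an eigenvector with eigenvalue $-(s_u,s_v)$.

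For the multiplicity, I would write $W_E=\{v_0,v_1,\ldots,v_k\}$ with $k=|W_E|-1$. Since $E_{v_0}(H)=E_{v_i}(H)=E$ for each $i$, the scalar $(s_{v_0},s_{v_i})=\sum_{e\in E}\frac{1}{|e|-1}$ is independent of $i$, so every $x_{v_0v_i}$ is an eigenvector for the same eigenvalue $-(s_{v_0},s_{v_i})$. As the vectors $x_{v_0v_1},\ldots,x_{v_0v_k}$ are linearly independent (their supports overlap only in the coordinate $v_0$), they span a $k$-dimensional eigenspace, giving the stated multiplicity $|W_E|-1$.

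Since the argument is a direct analogue of \Cref{adj-rdr}, I do not expect a genuine obstacle; the only point demanding care is the bookkeeping of the weight, namely verifying that the factor $\frac{1}{|e|-1}$ pulls through cleanly so that $(s_u,s_v)=a_{uv}$ and that the off-diagonal rows $u$ and $v$ still agree when summed against the common generator $E$. The standing assumption $|e|>1$ is exactly what makes both the entries of $A_{(2,H)}$ and the inner product meaningful, and no other structural input is required.
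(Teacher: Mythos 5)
Your proposal is correct and matches the paper's argument exactly: the paper's proof of this theorem consists precisely of the identity $(s_u,s_v)=\sum_{e\in E(H)}\frac{1}{|e|-1}b_{eu}b_{ev}=\sum_{e\in E_u(H)\cap E_v(H)}\frac{1}{|e|-1}=a_{uv}$ followed by the remark that one proceeds exactly as in the proof of \Cref{adj-rdr}, which is the transcription you carried out. Your version simply writes out in full the eigenvector computation with $x_{uv}=\chi_{\{u\}}-\chi_{\{v\}}$ and the linear independence of $x_{v_0v_1},\ldots,x_{v_0v_k}$ that the paper leaves implicit.
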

\begin{proof}
    The inner product $( s_u,s_v)=\sum\limits_{e\in E(H)}\frac{1}{|e|-1}b_{eu}b_{ev}=\sum\limits_{e\in E_u(H)\cap E_v(H)}\frac{1}{|e|-1}=a_{uv}$. Using this fact and proceeding exactly similar to the proof of the\Cref{adj-rdr}, the theorem follows.
\end{proof}
\begin{exm}\rm \label{ex-adj-ban}
   Let us start with the same hypergraph $H$ (illustrated in the \Cref{fig:hyp-unit}) and the incidence matrix $B_H$ that we have considered in the \Cref{exm-adj-rdr}. % The matrix 
   % $A_{(2,H)}=\left[\begin{smallmatrix}
   %     0 & \frac{1}{2} & \frac{1}{3} & \frac{1}{3} & \frac{1}{6} & \frac{1}{6} & \frac{1}{6} & 0 & 0 & \frac{1}{6} & \frac{1}{6}\\ \frac{1}{2} & 0 & \frac{1}{3} & \frac{1}{3} & \frac{1}{6} & \frac{1}{6} & \frac{1}{6} & 0 & 0 & \frac{1}{6} & \frac{1}{6}\\ \frac{1}{3} & \frac{1}{3} & 0 & \frac{5}{6} & 0 & 0 & 0 & 0 & 0 & \frac{1}{2} & 0\\ \frac{1}{3} & \frac{1}{3} & \frac{5}{6} & 0 & 0 & 0 & 0 & 0 & 0 & \frac{1}{2} & 0\\ \frac{1}{6} & \frac{1}{6} & 0 & 0 & 0 & \frac{5}{12} & \frac{5}{12} & \frac{1}{4} & \frac{1}{4} & \frac{1}{6} & \frac{1}{6}\\ \frac{1}{6} & \frac{1}{6} & 0 & 0 & \frac{5}{12} & 0 & \frac{5}{12} & \frac{1}{4} & \frac{1}{4} & \frac{1}{6} & \frac{1}{6}\\ \frac{1}{6} & \frac{1}{6} & 0 & 0 & \frac{5}{12} & \frac{5}{12} & 0 & \frac{1}{4} & \frac{1}{4} & \frac{1}{6} & \frac{1}{6}\\ 0 & 0 & 0 & 0 & \frac{1}{4} & \frac{1}{4} & \frac{1}{4} & 0 & \frac{7}{12} & \frac{1}{3} & \frac{1}{3}\\ 0 & 0 & 0 & 0 & \frac{1}{4} & \frac{1}{4} & \frac{1}{4} & \frac{7}{12} & 0 & \frac{1}{3} & \frac{1}{3}\\ \frac{1}{6} & \frac{1}{6} & \frac{1}{2} & \frac{1}{2} & \frac{1}{6} & \frac{1}{6} & \frac{1}{6} & \frac{1}{3} & \frac{1}{3} & 0 & \frac{1}{2}\\ \frac{1}{6} & \frac{1}{6} & 0 & 0 & \frac{1}{6} & \frac{1}{6} & \frac{1}{6} & \frac{1}{3} & \frac{1}{3} & \frac{1}{2} & 0 
   % \end{smallmatrix}\right]$
The matrix $A_{(2,H)}=\frac{1}{12}\left[
\begin{smallmatrix}
     0 & 6 & 4 & 4 & 2 & 2 & 2 & 0 & 0 & 2 & 2\\ 6 & 0 & 4 & 4 & 2 & 2 & 2 & 0 & 0 & 2 & 2\\ 4 & 4 & 0 & 10 & 0 & 0 & 0 & 0 & 0 & 6 & 0\\ 4 & 4 & 10 & 0 & 0 & 0 & 0 & 0 & 0 & 6 & 0\\ 2 & 2 & 0 & 0 & 0 & 5 & 5 & 3 & 3 & 2 & 2\\ 2 & 2 & 0 & 0 & 5 & 0 & 5 & 3 & 3 & 2 & 2\\ 2 & 2 & 0 & 0 & 5 & 5 & 0 & 3 & 3 & 2 & 2\\ 0 & 0 & 0 & 0 & 3 & 3 & 3 & 0 & 7 & 4 & 4\\ 0 & 0 & 0 & 0 & 3 & 3 & 3 & 7 & 0 & 4 & 4\\ 2 & 2 & 6 & 6 & 2 & 2 & 2 & 4 & 4 & 0 & 6\\ 2 & 2 & 0 & 0 & 2 & 2 & 2 & 4 & 4 & 6 & 0 
\end{smallmatrix}
\right]$.  As the \Cref{adj-ban} suggested, the units \(W_{E_1} = \{1, 2\}, W_{E_2} = \{3, 4\}, W_{E_3} = \{5, 6, 7\}, W_{E_4} = \{8, 9\}\) correspond to the eigenvalue \(-( s_1, s_2) = -\frac{1}{2}\) with multiplicity $1$, \(-( s_3, s_4 ) = -\frac{5}{6}\) with multiplicity $1$, \(-( s_5, s_6 ) = -\frac{5}{12}\) with multiplicity $2$, and \(-( s_8, s_9)= -\frac{7}{12}\) with multiplicity $1$, respectively.
\end{exm}
 The \Cref{adj-rdr} and the \Cref{adj-ban} can be generalised further using a positive valued hyperedge function. Let $w:E(H)\to(0,\infty)$ be a positive valued function. Consider the inner product $(\cdot,\cdot)_w$ such that for two vectors $x:E(H)\to \mathbb{C}$ and $y:E(H)\to\mathbb{C}$, it holds that $(x,y)_w=\sum\limits_{e\in E(H)}w(e)x(e)y(e)$. Now, we can define an adjacency matrix $A_{(w,H)}=[a_{uv}]_{u,v\in V(H)}$ such that for two distinct vertices $u$ and $v$, the entry $a_{uv}=\sum\limits_{e\in E_u(H)\cap E_v(H)}w(e)$, and all the diagonal entries of the matrix are $0$. This matrix is a generalisation of both $A_{(1,H)}$ and $A_{(2,H)}$. For this matrix, we can conclude the following:
\begin{thm}\label{adj-w}
    Let $H$ be a hypergraph. For each unit $W_E$ in $H$ with $|W_E|>1$, the adjacency matrix $A_{(w,H)}$ has an eigenvalue $-( s_u,s_v)_w$ of multiplicity $|W_E|-1$, where $u,v(\ne u)\in W_E$.
\end{thm}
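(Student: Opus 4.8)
The plan is to follow the proof of \Cref{adj-rdr} essentially verbatim, replacing the standard inner product by the weighted inner product $(\cdot,\cdot)_w$. The whole argument rests on a single identity tying $A_{(w,H)}$ to the columns of $B_H$: for two distinct vertices $u,v$,
\[
(s_u,s_v)_w=\sum_{e\in E(H)}w(e)b_{eu}b_{ev}=\sum_{e\in E_u(H)\cap E_v(H)}w(e)=a_{uv},
\]
since $b_{eu}b_{ev}=1$ exactly when $e\in E_u(H)\cap E_v(H)$. This is the exact analogue of the identities used in \Cref{adj-rdr} and \Cref{adj-ban} (which are the special cases $w\equiv 1$ and $w(e)=\tfrac{1}{|e|-1}$), so the remainder of the proof should transfer without change.

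First I would fix a unit $W_E$ with $|W_E|>1$ and pick two distinct vertices $u,v\in W_E$. Because $u$ and $v$ lie in the same unit, $E_u(H)=E_v(H)=E$; hence for every third vertex $w\in V(H)\setminus\{u,v\}$ we get $a_{wu}=\sum_{e\in E_w(H)\cap E}w(e)=a_{wv}$. Next I would apply $A_{(w,H)}$ to the vector $x_{uv}=\chi_{\{u\}}-\chi_{\{v\}}$ and read off its entries: the $w$-th entry vanishes by the equality $a_{wu}=a_{wv}$, the $u$-th entry is $a_{uu}-a_{uv}=-a_{uv}$, and the $v$-th entry is $a_{vu}-a_{vv}=a_{uv}$. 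Using the fact that the diagonal of $A_{(w,H)}$ is zero, this yields $A_{(w,H)}x_{uv}=-a_{uv}x_{uv}=-(s_u,s_v)_w\,x_{uv}$, so $x_{uv}$ is an eigenvector with eigenvalue $-(s_u,s_v)_w$.

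For the multiplicity, I would write $W_E=\{v_0,v_1,\dots,v_k\}$ with $k=|W_E|-1$ and note that $(s_{v_0},s_{v_i})_w=\sum_{e\in E}w(e)$ is the same for every $i$, so the $k$ eigenvectors $x_{v_0v_1},\dots,x_{v_0v_k}$ all share the eigenvalue $-\sum_{e\in E}w(e)=-(s_u,s_v)_w$. These vectors are linearly independent exactly as in the proof of \Cref{adj-rdr}, whence the eigenvalue has multiplicity at least $|W_E|-1$.

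I do not anticipate a genuine obstacle here: since $A_{(1,H)}$ and $A_{(2,H)}$ are both recovered by specializing $w$, the statement is really a bookkeeping generalization, and the only things to verify carefully are that the defining identity $(s_u,s_v)_w=a_{uv}$ still holds and that $(\cdot,\cdot)_w$ is a legitimate inner product. The latter is guaranteed by the positivity of $w$, which makes $(\cdot,\cdot)_w$ positive definite and in particular keeps the produced eigenvectors non-zero, so the computation is valid.
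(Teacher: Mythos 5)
Your proposal is correct and matches the paper's proof: the paper likewise reduces \Cref{adj-w} to the identity $(s_u,s_v)_w=\sum_{e\in E_u(H)\cap E_v(H)}w(e)=a_{uv}$ and then repeats the argument of \Cref{adj-rdr} verbatim, producing the eigenvectors $x_{v_0v_1},\ldots,x_{v_0v_k}$ for a unit $W_E=\{v_0,\ldots,v_k\}$. Your added remarks (positivity of $w$ making $(\cdot,\cdot)_w$ a genuine inner product, and the minor notational care with the letter $w$) are sound but do not change the substance.
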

\begin{proof}
    The proof is exactly similar to the proof of the \Cref{adj-rdr} and follows from the fact that for two distinct $u,v\in V(H)$, the inner product $ (s_u,s_v)_w=\sum\limits_{e\in E_u(H)\cap E_v(H)}w(e)$.
\end{proof}
This type of result holds not only for units but also for other structural symmetries of hypergraphs. For two equivalence relations $\mathfrak{R}_1$ and $\mathfrak{R}_2$, we say $\mathfrak{R}_2$ is finer than $\mathfrak{R}_1$ if $(u,v)\in\mathfrak{R}_2$ implies that $(u,v)\in\mathfrak{R}_1$. Given any matrix $M=[m_{uv}]_{u,v\in V(H)}$ associated with hypergraph $H$, consider the equivalence relation $\mathfrak{R}_M$ defined as $\mathfrak{R}_M=\{(u,v)\in V(H)\times V(H): m_{uu}=m_{vv},m_{uv}=m_{vu}\text{~and~}m_{uw}=m_{vw}, m_{wu}=m_{wv}\text{~for all~}w\in V(H)\setminus \{u,v\}\} $. Given a matrix $A$ associated with a hypergraph, if any equivalence relation $\mathfrak{R}$ on the vertex set $V(H)$ is finer than $\mathfrak{R}_A$, then each $\mathfrak{R}$-equivalence class $W$ with $|W|>1$ corresponds to an eigenvalue of $A$ with multiplicity $|W|-1$ (\cite{unit}). For a distinct pair of vertices $u,v\in W$, the vector $x_{uv}$ is an eigenvector of the eigenvalue. Now, we prove that for the adjacency matrix $A_{(w,H)}$, these eigenvalues are related to the inner product of columns of $B_H$.
\begin{thm}\label{adj-weight-finer}
    Let $H$ be a hypergraph. If $\mathfrak{R}$ is an equivalence relation on $V(H)$ such that $\mathfrak{R}$ is finer than the equivalence relation $\mathfrak{R}_{A_{(w,H)}}$, then for each $\mathfrak{R}$-equivalence class $W$, the adjacency matrix $A_{(w,H)}$ has an eigenvalue $-( s_u,s_v)_w$ of multiplicity $|W|-1$, where $u,v(\ne u)\in W$.
\end{thm}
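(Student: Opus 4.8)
The plan is to follow the computation in the proof of \Cref{adj-rdr} almost verbatim, replacing the hypothesis ``$u,v$ lie in a common unit'' (i.e.\ $E_u(H)=E_v(H)$) by the defining conditions of the equivalence relation $\mathfrak{R}_{A_{(w,H)}}$, and then to identify the resulting eigenvalue with the weighted inner product $( s_u,s_v)_w$ of the columns of $B_H$. First I would record the key identity already observed in the proof of \Cref{adj-w}: for distinct $u,v\in V(H)$,
\[
( s_u,s_v)_w=\sum_{e\in E(H)}w(e)b_{eu}b_{ev}=\sum_{e\in E_u(H)\cap E_v(H)}w(e)=a_{uv},
\]
so the off-diagonal entries of $A_{(w,H)}$ are exactly these inner products, and since $E_u(H)\cap E_v(H)=E_v(H)\cap E_u(H)$ the matrix $A_{(w,H)}$ is symmetric with zero diagonal.

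Next, fix an $\mathfrak{R}$-equivalence class $W$ with $|W|>1$ and choose distinct $u,v\in W$. Because $\mathfrak{R}$ is finer than $\mathfrak{R}_{A_{(w,H)}}$, we have $(u,v)\in\mathfrak{R}_{A_{(w,H)}}$; in particular $a_{zu}=a_{zv}$ for every $z\in V(H)\setminus\{u,v\}$. I would then evaluate $A_{(w,H)}x_{uv}$ coordinatewise, where $x_{uv}=\chi_{\{u\}}-\chi_{\{v\}}$: at the coordinate $u$ it equals $a_{uu}-a_{uv}=-a_{uv}$; at $v$ it equals $a_{vu}-a_{vv}=a_{uv}$ by symmetry and the zero diagonal; and at any $z\neq u,v$ it equals $a_{zu}-a_{zv}=0$. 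Hence $A_{(w,H)}x_{uv}=-a_{uv}\,x_{uv}=-( s_u,s_v)_w\,x_{uv}$, so $x_{uv}$ is an eigenvector of $A_{(w,H)}$ with eigenvalue $-( s_u,s_v)_w$.

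To obtain the multiplicity and to guarantee that the claimed eigenvalue is well defined across $W$, I would show that the off-diagonal entry $a_{uv}$ is constant over distinct pairs drawn from $W$. For distinct $u,v,v'\in W$, all three pairs lie in $\mathfrak{R}_{A_{(w,H)}}$; applying $(v,v')\in\mathfrak{R}_{A_{(w,H)}}$ with the witness $u\notin\{v,v'\}$ gives $a_{uv}=a_{uv'}$. Writing $W=\{v_0,v_1,\ldots,v_m\}$ with $m=|W|-1$, the vectors $x_{v_0v_1},\ldots,x_{v_0v_m}$ are therefore all eigenvectors sharing the single eigenvalue $-( s_{v_0},s_{v_1})_w=-( s_u,s_v)_w$, and they are linearly independent since $x_{v_0v_i}$ is the only one among them having a nonzero coordinate at $v_i$. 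This produces multiplicity at least $|W|-1$, as required.

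The entrywise computation is routine, as is the linear independence; the step that genuinely needs attention is the well-definedness in the third paragraph, namely verifying that $-( s_u,s_v)_w$ does not depend on which pair is selected from $W$. Without it the phrase ``an eigenvalue of multiplicity $|W|-1$'' would be ambiguous, and it is precisely the transitivity-flavored consequence of the $\mathfrak{R}_{A_{(w,H)}}$ conditions (that $a_{uv}=a_{uv'}$ for distinct $u,v,v'\in W$) that supplies it. Everything else is parallel to \Cref{adj-rdr} and \Cref{adj-w}.
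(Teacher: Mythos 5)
Your proposal is correct and follows essentially the same route as the paper's proof: the eigenvector $x_{uv}=\chi_{\{u\}}-\chi_{\{v\}}$, the coordinatewise computation using the $\mathfrak{R}_{A_{(w,H)}}$ conditions, the identification $a_{uv}=(s_u,s_v)_w$, and the linear independence of $x_{v_0v_1},\ldots,x_{v_0v_m}$ for the multiplicity. The only difference is cosmetic: you spell out the well-definedness of the eigenvalue across the class (that $a_{uv}=a_{uv'}$ for distinct $u,v,v'\in W$), which the paper asserts in the chain $(s_{v_0},s_{v_1})_w=\cdots=(s_{v_0},s_{v_k})_w$ without the explicit witness argument — a welcome clarification, not a different proof.
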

\begin{proof}
    Since $(u,v)\in \mathfrak{R}$, and $\mathfrak{R}$ is finer than $\mathfrak{R}_{A_{(w,H)}}$, it holds that $(u,v)\in \mathfrak{R}_{A_{(w,H)}}$. Therefore, $a_{uu}=a_{vv}=0,a_{uv}=a_{vu}$, and $a_{uu'}=a_{vu'}, a_{u'u}=a_{u'v}$ for all $u'\in V(H)\setminus \{u,v\}\}$. Thus, for the vector $x_{uv}=\chi_{\{u\}}-\chi_{\{v\}}$, we have for any $u'\in V(H)$, $(A_{(w,H)}x_{uv})(u')=a_{u'u}-a_{u'v}$. For all $u'\in V(H)\setminus \{u,v\}\}$, since $a_{uu'}=a_{vu'}, a_{u'u}=a_{u'v}$ we have $(A_{(w,H)}x_{uv})(u')=0$. Since the diagonal entries of $A_{(w,H)}$ are $0$, it holds that $(A_{(w,H)}x_{uv})(u)=-a_{uv}=-a_{vu}= -(A_{(w,H)}x_{uv})(v)$. Therefore, $A_{(w,H)}x_{uv}=-a_{uv}x_{uv}=-( s_u,s_v)_wx_{uv}$. If $W=\{v_0.v_1,\ldots,v_k\}$, then $( s_{v_0},s_{v_1})=( s_{v_0},s_{v_2})=\ldots=( s_{v_0},s_{v_k})$, and $A_{(w,H)}x_{v_0v_i}=-( s_{v_0},s_{v_i})_wx_{v_0v_i}$ for all $i=1,\ldots,k$. Since $x_{v_0v_1},\ldots,x_{v_0v_k}$ are linearly independent, the multiplicity of this eigenvalue is $|W|-1$. 
\end{proof}
For the weight function $w:E(H)\to (0,\infty)$ defined by $w(e)=1$ for all $e\in E(H)$, the adjacency matrix $A_{(w,H)}=A_{(1,H)}$. For a hypergraph $H$ with, if $|e|>1$ for all $e\in E(H)$, if we set $w:E(H)\to (0,\infty)$ as $w(e)=\frac{1}{|e|-1}$, then $A_{(w,H)}=A_{(2,H)}$. Therefore, the \Cref{adj-weight-finer} holds for both the matrices $A_{(1,H)}$ and $A_{(2,H)}$. That is, if $\mathfrak{R}$ is finer than $\mathfrak{R}_{A_{(1,H)}}$, then any $\mathfrak{R}$-equivalence class $W$ with $|W|>1$ corresponds to the eigenvalue $\langle s_u,s_v\rangle$ for $u,v(\ne u)\in W$. Similarly for the matrix ${A_{(2,H)}}$, the eigenvalue is $ (s_u,s_v)$. Since the equivalence class $\mathfrak{R}_u$ is finer than $\mathfrak{R}_{A_{(w,H)}}$, \Cref{adj-rdr}, \Cref{adj-ban}, and \Cref{adj-w} can be proved as a Corollary of the \Cref{adj-weight-finer}. In the next example, we show that besides units, the \Cref{adj-weight-finer} can be applied for other hypergraph symmetries as well.
\begin{exm}\rm\label{ex-adj-w}
    Consider the hypergraph $H$ with the vertex set $V(H)=\{1,2,3,4\}$, and the hyperedge set $E(H)=\{e_1=\{1,2,3\},e_2=\{1,3,4\},e_3=\{1,4,2\}\}$. The edge-vertex incidence matrix $B_H=\begin{smallmatrix}\hline
      | \phantom{e_1} |&1&2&3&4|\\\hline\hline
      |  e_1 |&1&1&1&0|\\\hline | e_2 |&1&0&1&1|\\\hline |e_3 |&1&1&0&1|\\\hline
    \end{smallmatrix}$. Consider the equivalence relation $\mathfrak{R}=\{(1,1),(2,2),(3,3),(4,4),(2,3),(3,2),(3,4),(4,3),(2,4),(4,2)\}$. The two $\mathfrak{R}$-equivalence classes are $\{1\},\{2,3,4\}$. The adjacency matrix $A_{(1,H)}=\left[\begin{smallmatrix}
   0&2&2&2\\2&0&1&1\\2&1&0&1\\2&1&1&0
    \end{smallmatrix}\right]$.
    This matrix is a symmetric matrix with all its diagonal entries being $0$, and for any two vertices $u,v\in\{2,3,4\}$, $a_{uw}=a_{vw}$ for any $w\notin\{u,v\}$; therefore, $\mathfrak{R}$ is finer than $\mathfrak{R}_{A_{(1, H)}}$. Consequently, by the the \Cref{adj-weight-finer}, $-\langle s_2,s_3\rangle=-1$ is an eigenvalue of $\mathfrak{R}_{A_{(1,H)}}$. Moreover, $x_{12}$ and $x_{13}$ are two linearly independent eigenvectors of $-1$. Therefore, the multiplicity of this eigenvalue is at least $2$. For any $w:E(H)\to(0,\infty)$ with $w(e_1)=w(e_2)=w(e_3)$, the matrix $A_{(w,H)}=\left[
    \begin{smallmatrix}
        0&w(e_1)+w(e_3)&w(e_1)+w(e_2)&w(e_2)+w(e_3)\\w(e_1)+w(e_3)&0&w(e_1)&w(e_3)\\w(e_1)+w(e_2)&w(e_1)&0&w(e_2)\\w(e_2)+w(e_3)&w(e_3)&w(e_2)&0
    \end{smallmatrix}
    \right]$.
    By the \Cref{adj-w}, $-(s_2,s_3)_w=-w(e_1)$ is an eigenvalue of $A_{(w,H)}$ with multiplicity $2$.
\end{exm}
Since in the above example, $|e_1|=|e_2|=|e_3|$, for any $w: E(H)\to (0,\infty)$ that depends only on the cardinality of hyperedges, the condition $w(e_1)=w(e_2)=w(e_3)$ holds. For instance, in a hypergraph with $|e|>1$ for all $e\in E(H)$, if we set $w:E(H)\to(0,\infty)$ as $w(e)=\frac{1}{|e|-1}$, then this $w$ depends only on the cardinality of hyperedges. Thus, for the hypergraph $H$ considered in the above example, $-(s_2,s_3)_w=-w(e_1)=-\frac{1}{2}$ is an eigenvalue of $A_{(2,H)}$.
\section{Conclusion}In the context of graphs, for an even cycle graph $C_{2n}$, the matrix $B_{C_{2n}}$ can never be of full rank. Here we present a hypergraph analogue of these even cycles in \Cref{x-w-prop} and \Cref{gcd}. We observed like a cycle $C_{2n}$, for a $k$-uniform cycle of length $n$, $C^k_n$, the edge-vertex incidence matrix $B_{C^k_n}$ can never be of full rank if $g.c.d(k,n)=r>1$. For any $r$-th root of unity $\omega(\ne 1)$, vector $x_\omega$ always belongs to the null space of $B_{C^k_n}$. The study of rank and incidence matrices also leads to a hypergraph analogue of the bipartite graph: an equal partition of hyperedges. Since even cycles and bipartite graphs exhibit many desirable properties, it would be intriguing to study how far these hypergraph analogues exhibit similar properties. Some hypergraph structures that would decrease the rank of the vertex-edge incidence matrix are presented here. Thus, by \Cref{thm-induced}, given that a hypergraph $H$ has an induced sub-hypergraph $H'$ such that $H'$ contains any of these substructures, then $B_H$ can never have the full rank. Considering the connection of the incidence matrix and some variations of the adjacency matrix observed here, the interrelation between the incidence matrix and the eigenvalues of the other matrices associated with hypergraphs would be an intriguing direction to explore.
\section*{Acknowledgement} The author, Samiron Parui, expresses gratitude to the National Institute of Science Education and Research for their financial support through the Department of Atomic Energy plan project RIA 4001 (NISER). 
	% \bibliographystyle{siam}
	% \bibliography{ref}

 \end{document}